\documentclass[a4paper,12pt]{article}
\usepackage{amsthm}
\usepackage{extpfeil}

\setlength{\topmargin}{-.2in} \setlength{\oddsidemargin}{.0in}
\setlength{\textheight}{8.5in} \setlength{\textwidth}{6.35in}
\setlength{\footnotesep} {\baselinestretch\baselineskip}
\newlength{\abstractwidth}
\setlength{\abstractwidth}{\textwidth}
\addtolength{\abstractwidth}{-6pc}

\flushbottom \thispagestyle{empty} \pagestyle{plain}

\renewcommand{\thanks}[1]{\footnote{#1}} 

\newcommand{\be}{\begin{equation}}
\newcommand{\bea}{\begin{eqnarray}}
\newcommand{\eea}{\end{eqnarray}} \newcommand{\ee}{\end{equation}}
\newcommand{\N}{{\cal N}} 
 \def\ba{\begin{eqnarray}}
\def\ea{\end{eqnarray}}


\def\N{{\cal N}}

\def\ra{\rightarrow}

\def\o{\omega}

\def\o{\omega}

\def\al{\alpha}
\def\b{\beta}

\def\o{\omega}

\def\t{\theta}

\def\ti{\tilde}

\def\N{\bf N}

\def\R{{\bf R}}

\def\ra{\rightarrow}

\def\F{{\cal F}}

\def\[{{\bf [}}
\def\]{{\bf ]}}

\def\Ci{{\bf C}^{\infty}}
\def\pl{\partial}



\begin{document}
\newtheorem{theorem}{Theorem}
\newtheorem{proposition}{Proposition}
\newtheorem{lemma}{Lemma}
\newtheorem{corollary}{Corollary}
\newtheorem{definition}{Definition}
\newtheorem{conjecture}{Conjecture}
\newtheorem{example}{Example}
\newtheorem{claim}{Claim}

\begin{centering}
 
\textup{\LARGE\bf{Stable Higgs bundles and Hermitian-Einstein metrics on non-K\"ahler manifolds}}

\vspace{5 mm}

\textnormal{Adam Jacob}

\begin{abstract}
{\small

Let $X$ be a compact Gauduchon manifold, and let $E$ and $V_0$ be holomorphic vector bundles over $X$. Suppose that $E$ is stable when considering all subsheaves preserved by a Higgs field $\t\in H^0({\rm End}(E)\otimes V_0)$. Then a modified version of the Donaldson heat flow converges along a subsequence of times to a solution of a generalized Hermitian-Einstein equation, given by ${i\mkern1mu}\Lambda F+[\t,\t^\dagger]=\lambda I$.}

\end{abstract}

\end{centering}

{\vskip -5mm}
\begin{normalsize}

\medskip
\section{Introduction}

Given a holomorphic vector bundle $E$ over a complex manifold $X$, a natural question is whether it admits a Hermitian-Einstein metric. Existence of such a metric was first proven by Narasimhan and Seshadri in the case of curves $\cite{NS}$, then for algebraic surfaces by Donaldson $\cite{Don1}$, and for higher dimensional compact  K\"ahler manifolds by Uhlenbeck and Yau $\cite{UY}$. Buchdahl extended Donaldson's result to arbitrary compact complex surfaces in $\cite{Buch2}$, and Li and Yau generalized Uhlenbeck and Yau's theorem to any compact complex Hermitian manifold in $\cite{LY}$. In all cases, existence was found to be equivalent to slope stability in the sense of Mumford-Takemoto.

Many generalizations of this result exist, including to the case of Higgs bundles by Simpson in $\cite{Simp}$. We briefly review his result here. Let $X$ be a compact K\"ahler manifold. A Higgs bundle is a vector bundle $E$, together with a holomorphic endomorphism valued one form:
\be
\t:E\longrightarrow \Lambda ^{1,0}(E)\nonumber
\ee
called a Higgs field. We assume the Higgs field satisfies the integrability condition $\t\wedge\t=0$. If $\t^\dagger$ is the adjoint of $\t$ with respect to $H$, and $\nabla$ is the usual unitary-Chern connection on $E$, we can define a new connection $D:=\nabla+\t+\t^\dagger$, and try to solve the Hermitian-Einstein problem:
\be
\label{HE equation1}
{i\mkern1mu}\Lambda F_\t=\lambda I,
\ee
where here $F_\t$ is the curvature of $D$. This leads to solutions of Hermitian-Einstein equation without the restriction that the connection be unitary. Given this setup, using much of the machinery from both the paper of Donaldson \cite{Don1} and Uhlenbeck and Yau $\cite{UY}$,  Simpson was able to construct a solution to $\eqref{HE equation}$ in the case that $E$ is stable. Here stability is defined as before, with the restriction that each subsheaf $\F$ be preserved by the Higgs field.

One of the key applications of Simpson's work is to use a solution of \eqref{HE equation1} to construct a flat bundle. If  $c_1(E)=0$, then equation \eqref{HE equation} reads
\be
\label{F0}
{i\mkern1mu}\Lambda F_\t=0.
\ee
Furthermore, if $c_2(E)=0$, it follows that
\be
0=\int_M{\rm Tr}(F_\t\wedge F_\t)\wedge\o^{n-2}=||F_\t||^2_{L^2}-||{i\mkern1mu}\Lambda F_\t||^2_{L^2},\nonumber
\ee
from which we can conclude that $D$ is a flat connection. Now, in the K\"ahler case this gives one half of the correspondence between stable Higgs bundles and stable representations of the fundamental group. To see the other half, define a flat connection $D$ on $E$ to be stable if $E$ admits no non-trivial $D$-invariant subbundles. Given a metric on $E$ we can decompose the connection $D$ as $D=\nabla+\theta+\theta^\dagger$, where $\nabla$ preserves the metric. Then if $X$ is K\"ahler, using the existence of harmonic metrics \cite{Cor1, DO, Don3}, and a Bochner type formula of Siu $\cite{Siu2}$ and Sampson $\cite{Samp}$, it follows that if $D$ is stable than there exists a metric so that $(\nabla^{0,1})^2=\nabla^{0,1}\t=\t\wedge\t=0$, thus constructing a stable Higgs bundle. This correspondence between Higgs bundles and flat connections has yielded some fascinating geometric and topological results, including restrictions on the fundamental group of compact K\"ahler manifolds. For more details we direct the reader to \cite{Cor2}. 

It is natural to ask if the above results can be generalized to the non-K\"ahler case. In $\cite{Bis}$, by constructing an explicit example, it was shown by Biswas that the direct correspondence between stable Higgs bundles and representations of the fundamental group does not extend to this case. Note that when $X$ is non K\"ahler, the degree of a bundle is not a topological invariant. Thus a solution to \eqref{F0} will only yield a flat bundle if 
\be
\int_M{\rm Tr}(F_\t)\wedge\o^{n-1}=\int_M{\rm Tr}(F_\t\wedge F_\t)\wedge\o^{n-2}=0,\nonumber
\ee
which is a much more restrictive condition than that of vanishing Chern classes. Furthermore, given a stable flat connection, one can only construct a stable Higgs bundle if certain metric invariants called ``pseudo Chern classes" vanish \cite{Lub} (they always vanish if $X$ is K\"ahler, see \cite{Simp2} for details).

Despite the above difficulties, much work has been done to generalize equation \eqref{HE equation1} and study the corresponding moduli of solutions in the non-K\"ahler case. An extremely general correspondence between stable holomorphic paris and solutions of a Hermitian-Einstein type equation was worked out by Teleman and L\"ubke in \cite{TL2}, building off the work of Banfield \cite{Ban}, Mundet i Riera \cite{M}, and Bradlow, Garcia-Prada, and Mundet i Riera \cite{BGM} (among others) in the K\"ahler case. The holomorphic pairs considered consist of a holomorphic vector bundle and a group action. We direct the reader to \cite{TL2} and the references therein for details, and only address the case of Higgs pairs here. This is a special case of the more general setup proven in \cite{TL2, BGM}, yet is still a generalization of equation \eqref{HE equation1}. 

Let $V_0$ be a fixed holomorphic vector bundle with metric $\eta$. Consider the following $V_0$-twisted endomorphism:
\be
\t\in H^0({\rm End}(E)\otimes V_0).\nonumber
\ee
Given any metric $H$, we can take $\t^\dagger$ (the adjoint of the endomorphism part of $\t$), and define an $H$-Hermitian endomorphism of the bundle $E$ by $[\t,\t^\dagger]_\eta,$
given by the standard commutator contracted by the metric $\eta$ on $V_0$. One now looks for a solution to
\be
\label{HE equation}
{i\mkern1mu}\Lambda F+[\t,\t^\dagger]_\eta=\lambda I,
\ee
which again exists if and only if $E$ is stable. In \cite{TL2} Teleman and L\"ubke utilized the elliptic method of continuity (same as \cite{LY}) to solve their equation. In this paper we find a solution of \eqref{HE equation} using a parabolic heat flow method. Specifically, we look at the following non-linear flow on the space of metrics:
\be
\label{DHF}
H^{-1}\dot H=-({i\mkern1mu}\Lambda F+[\t,\t^\dagger]_\eta-\lambda I),
\ee
and prove convergence is dependent upon stability. Our main result is as follows:
\begin{theorem}
\label{maintheorem}
Let $X$ be a compact, complex Hermitian manifold equipped with a Gauduchon metric, and let $V_0$ and $E$ be holomorphic vector bundles over $X$. Assume there exists a Higgs field $\t\in H^0({\rm End}(E)\otimes V_0)$, and that the pair $(E,\t)$ is indecomposable. Then a family of metrics $H(t)$ evolving along \eqref{DHF} converges along a subsequence of times to a solution of \eqref{HE equation} if and only if $(E,\t)$ is stable.
\end{theorem}

 The parabolic approach we follow was used by Donaldson $\cite{Don1}$ and Simpson $\cite{Simp}$ in the K\"ahler case. Since we are focusing on the non-K\"ahler case, some extra care needs to be taken. Aside from having to be careful with additional terms during integrating by parts, the main difficulty we encounter is that Simpson's proof of the $C^0$ estimate for $H$ does not carry over to our case, because the form of the Donaldson functional he uses can not be defined if $X$ is only Gauduchon. Instead, we adapt the elliptic $C^0$ estimate of Uhlenbeck and Yau to our parabolic setting. This step requires careful control of the subsequences taken along to the flow in order to construct a destabilizing subsheaf. Just as in \cite{Simp}, we also need a fundamental theorem of Uhlenbeck and Yau, which states that weakly holomorphic subbundles are in fact holomorphic subsheaves of $E$. Armed with the $C^0$ estimate, we then use parabolic and elliptic methods to gain higher order estimates for $H$, allowing us to prove convergence along a subsequence.

Our result provides the first heat flow proof of the Hermitian-Einstein problem in the non-K\"ahler case. We consider this approach a worthwhile investigation in that we have developed techniques for using geometic flows in this more general setting. Heat flow methods have gained in prominence following Perelman's solution of the Poincar\'e conjecture \cite{P1,P2,P3} using Hamilton's Ricci flow \cite{Ham}, and the study of flows related to the Ricci flow, the mean curvature flow, and the Yang Mills flow (among many others) remains an active branch of current research in differential geometry.

We divide up the paper as follows. Section 2 contains general background material that will be used throughout the subsequent arguments. In Section 3 we introduce the Donaldson heat flow and describe the evolution of certain key quantities.  Section 4 contains the proof of long time existence of the flow as well as the proof of our main result under the assumption that $H$ is bounded in $C^0$. Finally, in Section 5 we show how to achieve the $C^0$ bound for $H$ using the stability of $(E,\t)$. 

\medskip

\begin{centering}
{\bf Acknowledgements}
\end{centering}
\medskip

First and foremost, the author would like to thank his thesis advisor, D.H. Phong, for all his guidance and support during the process of writing this paper. The author also thanks Valentino Tosatti and Gabor Sz\'ekelyhidi for much encouragement and some helpful suggestions. Finally, the author would like to thank the referee for pointing out several errors and making important suggestions to an earlier draft of this paper. The referee also introduced the author to \cite{Ban, BGM, TL2, M}, and suggested generalizing the earlier draft to Higgs pairs, and for this the author is most grateful. This research was funded in part by the National Science Foundation, Grant No. DMS-07-57372, as well as Grant No. DMS-1204155. The results of this paper are part of the author's Ph.D. thesis at Columbia University.

\section{Preliminaries}

\label{HB}

We begin with some basic definitions used throughout the paper. Let $X$ denote a compact Hermitian manifold of complex dimension $n$, and let $g$ be a Hermitian metric on the holomorphic tangent bundle $T^{1,0}X$.  Associated to $g$ one can construct the following fundamental form:
 \be
\omega=\frac{i}{2}\,g_{\bar kj}dz^j\wedge d\bar z^k.\nonumber
\ee
Wedging $\omega$ to the highest power defines the following natural volume form
\be
Vol(X):= \int_X\frac{\omega^n}{n!}.\nonumber
\ee
Let $\Lambda$ denote the adjoint of wedging with $\o$. If $\psi$ is a $(1,1)$ form, then one has the following useful equality
\be
\psi\wedge\frac{\o^{n-1}}{(n-1)!} = (\Lambda \psi)\, \frac{\o^n}{n!}.\nonumber
\ee

We are interested in certain special classes of Hermitian metrics, all of which are defined by properties of $\o$.

\begin{definition}
We say $g$ is  Gauduchon if $\pl\bar\pl(\o^{n-1})=0$, semi-K\"ahler if $d(\o^{n-1})=0$, and K\"ahler if $d(\o)=0$.
\end{definition}
In this paper we focus on metrics which satisfy the Gauduchon condition, which was introduced by Gauduchon in \cite{G}. Although such metrics have much less structure than K\"ahler metrics, they exist in abundance. In fact, any compact Hermitian manifold $X$ admits a Gauduchon metric.

Let $(E,\bar\pl)$ be a holomorphic vector bundle over $X$. Given a metric $H$, every holomorphic bundle admits a Chern connection $d_A$ which preserves the metric and defines the holomorphic structure on $E$. Because $X$ is complex, the Chern connection can be decomposed into $(1,0)$ and $(0,1)$ parts, which we denote by $\pl_A$ and $\bar\pl$. We also denote the Chern connection on the associated bundle End$(E)$ by $\pl_A$ and $\bar\pl$. Furthermore, we use the notation $\nabla=\nabla^{1,0}+\nabla^{0,1}$ to denote the Chern connection on all associated bundles of the form $E\otimes \Omega^{p,q}$ and End$(E)\otimes\Omega^{p,q}$. Thus, when working on $E\otimes \Omega^{0}$ and End$(E)\otimes\Omega^{0}$, one has $\nabla^{1,0}=\pl_A$ and $\nabla^{0,1}=\bar\pl$. However, working on $E\otimes\Omega^{p,q}$, with $p$ or $q$ (or both) nonzero, $\nabla^{1,0}$ and $\nabla^{0,1}$ contain connection terms coming from the bundle $\Omega^{p,q}$ in addition to $E$, while $\pl_A$ and $\bar\pl$ only contain connection terms for $E$. Because $g$ is Gauduchon and not K\"ahler, the Chern connection on $\Omega^{p,q}$ does not coincide with the Levi-Civita of the Riemannian metric, and one must deal with torsion terms when working with $\nabla$ on $\Omega^{p,q}$.

Let $F$ denote the curvature of of the Chern connection on $E$. Since the holomorphic structure $\bar\pl$ on $E$ is fixed, we can view $F$ as depending only on our choice of metric $H$.  

\begin{definition}\label{degree} The degree of the holomorphic bundle $(E,\bar\pl)$ is defined as follows
\be
deg(E)=\frac{{i\mkern1mu}}{2\pi}\int_X{\rm Tr}(F)\wedge\frac{\o^{n-1}}{(n-1)!}.\nonumber
\ee
\end{definition}
Because $g$ is Gauduchon, the above quantity does not depend on a choice of metric for $E$. Given a different metric $\hat H$ on $E$, there is a smooth function $\psi$ on $X$ which satisfies Tr$(F_\t-\hat F_\t)\wedge\o^{n-1}=\pl\bar\pl\psi\wedge\o^{n-1}$, and this integrates to zero in the Gauduchon case. Now, although degree is independent of the metric on the bundle upstairs, it does depend on $g$, and is only a topological invariant if $g$ is K\"ahler or semi-K\"ahler (see \cite{TL} for details). 

A metric is called Hermitian-Einstein if it solves the following equation,
\be
{i\mkern1mu}\Lambda F=\lambda I\nonumber
\ee
where $\lambda$ is a real number. In fact, because Definition \ref{degree} is independent of metric, the constant $\lambda$ is completely specified
\be
\label{lambda}
\lambda=\frac{2\pi deg(E)}{rk(E)Vol(X)}.
\ee
For notational simplicity we denote the $H$-Hermitian endomorphism ${i\mkern1mu}\Lambda F$ by $K$.

As stated in the introduction, in this paper we consider a generalization of the Hermitian Einstein equation. Let $V_0$ be a fixed holomorphic vector bundle with metric $\eta$. Consider the following $V_0$-twisted endomorphism
\be
\t\in H^0({\rm End}(E)\otimes V_0).\nonumber
\ee
In the classical theory of Higgs bundles one takes $V_0=\Omega^1_X$ (see \cite{Simp, Simp2}). Even though we allow for $V_0$ to be arbitrary, we still refer to $\t$ as a Higgs field. For a given metric $H$ we can consider the following section
\be
\t^\dagger\in\Gamma({\rm End}(E)\otimes \bar V_0),\nonumber
\ee 
defined by taking the adjoint of the endomorphism part of $\t$ with respect to the metric $H$. In other words for sections $s$ and $t$ of $E$, $\t^\dagger$ is defined so that the following sections of $V_0$ are equal
\be
\langle \t s,t\rangle_H=\langle s,\t^\dagger t\rangle_H.\nonumber 
\ee 
We now define an $H$-Hermitian endomorphism of the bundle $E$ by $[\t,\t^\dagger]_\eta,$ given by the standard commutator of the endomorphism parts of $\t$ and $\t^\dagger$ contracted by the metric $\eta$ on $V_0$. Note that this quantity varies as $H$ varies. In the K\"ahler case, $[\t,\t^\dagger]_\eta,$ is derived via a moment map construction. However, in our case $X$ is only Gauduchon and therefore not symplectic. As a result we choose not to describe this formalism here and instead direct the reader to \cite{Ban, M} for details.

\begin{definition}
\label{indecom}
We say the pair $(E,\t)$ is indecomposable if $E$ can not be split holomorphically into the direct sum of two subbundles, each of which is preserved by $\t$.
\end{definition}
From this point on we always assume $(E,\t)$ is indecomposable. The generalized Hermitian-Einstein equation we consider is expressed as follows
\be
K+[\t,\t^\dagger]_\eta=\lambda I.\nonumber
\ee
For notational simplicity we set $K_\t=K+[\t,\t^\dagger]_\eta$. Our main theorem is to solve the above equation using parabolic methods. Just as in the elliptic case existence is intimately tied to a notion of geometric stability, which we go over now.

Consider a proper coherent subsheaf $\F\subset E$ with torsion free quotient. 
\begin{definition}
We say $\F$ is a {\it sub-Higgs sheaf} of $E$ if $\t|_\F$ defines an element of $H^0({\rm End}(\F),V_0)$. 
\end{definition}
Since $(\F,\bar\pl)$ is a holomorphic vector bundle away from a singular set $Z(\F)$ of codimension 2, on $X\backslash Z(\F)$ we can consider the orthogonal projection $\pi:E\longrightarrow \F$ defined by $H$. Let $\phi$ be a section of $\F$. The connection $\nabla$ induces a connection on $\F$, which is given by $\nabla_{\F}(\phi)=\pi\circ \nabla(\phi)$. The second fundamental form is a map from $\F$ to its $H$-orthogonal complement $\F^\perp$ defined by
\be
(\nabla-\nabla_\F)\phi=(I-\pi)\nabla\phi.\nonumber
\ee
Because $\bar\pl$ preserves $\F$, we know $(I-\pi)\circ \bar\pl=0$, so the second fundamental form can in fact be expressed as $(I-\pi)\pl_A\phi$. We now compute
\be
\pl_A(\pi)\phi=\pl_A(\pi\phi)-\pi \pl_A(\phi)=(I-\pi)\pl_A\phi.\nonumber
\ee
Thus $\pl_A\pi:\F\rightarrow \F^\perp$ defines the second fundamental form associated to $\nabla$. Now, since the metric $H$ on $E$ defines a metric $H_\F$ on $\F$ over $X\backslash Z(\F)$ by inclusion, one can compute how curvature $F$ of $H$ restricts to $\F$ (see $\cite{GH}$)
\be
F_\F=\pi F\pi-(\pl_A\pi)^\dagger\wedge \pl_A\pi.\nonumber
\ee
Although this formula only holds on $X\backslash Z(\F)$, we know induced curvature is at least in $L^1$ (see \cite{J,Kob}, for instance), and since $Z(\F)$ has zero measure the degree of $\F$ can once again be defined by integrating over $X$
\be
deg(\F)=\frac{i\mkern1mu}{2\pi}\int_X{\rm Tr}(\pi F\pi)\wedge\frac{\o^{n-1}}{(n-1)!}-\frac1{2\pi}||\pl_A\pi||^2_{L^2(H)}.\nonumber
\ee
This is the well know Chern-Weil formula. 
\begin{definition}
We say $(E,\t)$ is stable if, given any proper sub-Higgs sheaf $\F\subset E$ with torsion free quotient, we have 
\be
\mu(\F)=\frac{deg({\cal F})}{rk({\cal F})}<\frac{deg(E)}{rk(E)}=\mu(E).\nonumber
\ee
\end{definition}

\section{The Donaldson heat flow}

\label{sdhf}

In this section we introduce the parabolic equation used to solve \eqref{HE equation}. Because of its similarities with the K\"ahler case, we still refer to the flow as the Donaldson heat flow. Given an initial metric $H_0$, we define the flow of endomorphsims $h=h(t)$ by
\be
h^{-1}\dot h=-(K_\t-\lambda I),\nonumber
\ee
where $h(0)=I$ and $K_\t=K_\t(t)$ is determined by metric $H(t)=H_0h(t)$. The main goal of this paper is to show the flow converges to a solution of $\eqref{HE equation}$ along a subsequence of times. First we compute the evolution of a few key terms. 

We start with the following standard formula, which can be found in \cite{TL, Siu}, and states that if a connection is evolving along a path of metrics, then the time derivative is given by
\be
\label{connection}
\dot \pl_A=\pl_A(h^{-1}\dot h).
\ee
The above formula can be used to compute the time derivative of the curvature endomorphism $K$
\be
\dot K={i\mkern1mu}\Lambda\bar\pl\pl_A(h^{-1}\dot h).\nonumber
\ee
To compute the time derivative of $K_\t$, we need to understand how the endomorphism $[\t,\t^\dagger]_\eta$ evolves. First, we note that although $\t$ is defined to act on sections of $E$, its action can be extended to sections of End$(E)$ by the formula $\t(h)=\t h-h\t$. Similarly we can extend $\t^\dagger$. Note that $\t(I)=\t^\dagger(I)=0$. Now, using the definition of adjoint one can compute
\be
\frac d{d t}(\t^\dagger)=\t^\dagger h^{-1}\dot h-h^{-1}\dot h\t^\dagger=\t^\dagger(h^{-1}\dot h).\nonumber
\ee
As a result we have
 \be
 \label{evolvF}
\frac d{d t}[\t,\t^\dagger]_\eta=[\t,\t^\dagger(h^{-1}\dot h)]_\eta.\nonumber
\ee
Using our flow equation \eqref{DHF}, it follows that
\be
\label{DHFK0}
\dot K_\t={i\mkern1mu}\Lambda \bar\pl\pl_A(h^{-1}\dot h)+[\t,\t^\dagger(h^{-1}\dot h)]_\eta=-{i\mkern1mu}\Lambda \bar\pl\pl_A(K_\t)-[\t,\t^\dagger(K_\t)]_\eta.
\ee

We now define some fully elliptic operators used in the arguments to follow. Consider both
\be
P_A'=i\Lambda \nabla^{1,0}\nabla^{0,1}\qquad{\rm and}\qquad P_A''=-i\Lambda \nabla^{0,1}\nabla^{1,0}.\nonumber
\ee
As in the previous section, the connection $\nabla$ denotes the Chern connection on the associated bundles $E\otimes \Omega^{p,q}$ and End$(E)\otimes\Omega^{p,q}$, and as a result both $P_A'$ and $P_A''$ are defined on these spaces. One could denote the above operators as $P_{A\otimes\Gamma}'$ and $P_{A\otimes\Gamma}''$ in order to specify that $\nabla$ contains connection terms $\Gamma$ for the bundle $\Omega^{p,q}$. However, these connection terms are fixed along the flow, so for notational simplicity we drop $\Gamma$ from our notation. We include the connection $A$ in our notation since it is changing along the flow, and we want to highlight this dependence on time.  Let $P$ to denote the operator $i\Lambda\pl\bar\pl$ on $C^\infty(X)$, where no connection terms are needed. Finally we note that the above operators are defined using the ``analyst convention,'' and are positive definite. 
 
 Returning to \eqref{DHFK0}, the evolution of $K_\t$ can be rewritten as
 \be
 \label{DHFK}
 \dot K_\t=P_A''(K_\t)-[\t,\t^\dagger(K_\t)]_\eta.
 \ee
 
\begin{lemma}
\label{supLF}
Along the Donaldson heat flow, $K_\t$ is uniformly bounded in $C^0$
\be
\sup_{X}|K_\t|_H^2<C.\nonumber
\ee
\end{lemma}
\begin{proof}
First, we remark that the pointwise inner product on endomorphisms of $E$ induced by the metric $H$ is given by
\be
\langle\cdot,\cdot\rangle_H={\rm Tr}(\cdot(\cdot)^\dagger).\nonumber
\ee
This leads the simple observation that 
\be
\langle[\t,\t^\dagger(K_\t)]_\eta, K_\t\rangle_H=|\t(K_\t)|^2_{H\otimes\eta},\nonumber
\ee
which can be seen using the definition of the commutator and properties of trace. Next we note that
\be
\frac d{d t}|K_\t|_H^2=\langle\dot K_\t, K_\t\rangle_H+\langle K_\t, \dot K_\t\rangle_H,\nonumber
\ee
since the contribution of the time derivative of the metric cancels along \eqref{DHF}. Plugging in \eqref{DHFK} yields
\bea
\frac d{d t}|K_\t|_H^2&=&\langle P_A'' K_\t, K_\t\rangle_H+\langle K_\t, P_A''K_\t\rangle_H-\langle[\t,\t^\dagger(K_\t)]_\eta, K_\t\rangle_H-\langle K_\t,[\t,\t^\dagger(K_\t)]_\eta\rangle_H\nonumber\\
&\leq&\langle P_A'' K_\t, K_\t\rangle_H+\langle K_\t, P_A''K_\t\rangle_H.\nonumber
\eea
Note that in this special case $\langle P_A'' K_\t, K_\t\rangle_H=\langle P_A' K_\t, K_\t\rangle_H$, since interchanging the order of derivatives introduces a commutator with $K$, which vanishes under trace. It follows that
\be
\frac d{d t}|K_\t|_H^2\leq\langle P_A' K_\t, K_\t\rangle_H+\langle K_\t, P_A'' K_\t\rangle_H\leq P|K_\t|^2_H.\nonumber
\ee
 The lemma now follows from the maximum principle. Even though $P$ is not equivalent to the standard Laplace-Beltrami operator on functions, the maximum principle applies, as shown by $(7.2.8)$ of \cite{TL}.
\end{proof}
Next we turn to the following normalization lemma.
\begin{lemma}
\label{deth}
We can pick an initial metric on $E$ so that $det(h)=1$ for all time along the flow.
\end{lemma}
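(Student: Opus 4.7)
The plan is to take the trace of the flow equation and reduce the question to a closed scalar parabolic equation for $\log\det(h)$. Since $\tr(h^{-1}\dot h) = \pl_t\log\det(h)$ and the trace of any commutator vanishes, applying $\tr$ to $\eqref{DHF}$ and using
\[
\tr(\Lambda F_\t) = \Lambda\tr(F) - g^{j\bar k}\tr[\t^\dagger_{\bar k},\t_j] = \Lambda\tr(F)
\]
yields the scalar evolution
\[
\frac{d}{dt}\log\det(h) = -\Lambda\tr(F) + r\mu(E),
\]
where $r = rk(E)$. Setting $u = \log\det(h)$, the induced metric on $\det(E)$ is $e^u\det(H_0)$, so its Chern curvature differs from $\tr(\hat F)$ by $\bar\pl\pl u$; this turns the evolution into the closed scalar parabolic equation
\[
\pl_t u + \Lambda\bar\pl\pl u = r\mu(E) - \Lambda\tr(\hat F).
\]

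From this presentation it is clear that $u\equiv 0$ is a solution with the correct initial condition $u(0) = 0$ precisely when the source vanishes, i.e.\ when $H_0$ is chosen so that $\Lambda\tr(\hat F) = r\mu(E)$ pointwise. I would produce such an $H_0$ by conformal rescaling of an arbitrary starting metric $K$ on $E$: setting $H_0 = e^{\phi/r}K$ for a real function $\phi$, the induced metric on $\det(E)$ becomes $e^\phi\det(K)$ and $\tr(\hat F) = \tr(F_K) - \pl\bar\pl\phi$. The pointwise condition then reduces to the scalar Poisson-type equation
\[
\Lambda\pl\bar\pl\phi = \Lambda\tr(F_K) - r\mu(E)
\]
for the unknown function $\phi$.

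On a Gauduchon manifold the second-order operator $\Lambda\pl\bar\pl$ is elliptic, and because $\pl\bar\pl\,\o^{n-1}=0$ its formal $L^2$-adjoint with respect to $\o^n$ annihilates constants; standard Fredholm theory therefore makes the equation solvable if and only if the right-hand side integrates to zero against $\o^n$. Using $\eqref{contraction}$ and the very definition of $\mu(E)$, this integral equals $deg(E) - r\mu(E)Vol(X) = 0$, so the required $\phi$ exists. With $H_0$ chosen in this way, $u \equiv 0$ solves the homogeneous linear parabolic equation $\pl_t u + \Lambda\bar\pl\pl u = 0$ with zero initial data, and uniqueness via the maximum principle applied to $\pm u$ forces $u(t) \equiv 0$, so $\det(h(t)) = 1$ for all $t$. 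The only delicate step is the Fredholm solvability on a Gauduchon manifold, which is classical but depends essentially on the Gauduchon assumption $\pl\bar\pl\,\o^{n-1}=0$ that has been built into the setup from the start.
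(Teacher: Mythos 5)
Your proof is correct and follows essentially the same route as the paper: conformally rescale an arbitrary metric $K$ by solving the scalar equation $\Delta\phi=\frac{i}{r}{\rm Tr}(\Lambda F_\t(K)-\mu(E)I)$ (solvable on a Gauduchon manifold precisely because the right-hand side integrates to zero by the definition of slope), and then use the heat equation satisfied by $\log\det(h)$ together with the maximum principle. The only cosmetic difference is that you apply uniqueness/maximum principle to $u=\log\det(h)$ directly, whereas the paper applies it to ${\rm Tr}(\Lambda F_\t-\mu(E)I)=-\pl_t u$; the two are equivalent.
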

The proof of the above lemma is identical to the proof in the K\"ahler case, once one makes the observation that Tr$(K_\t)=$ Tr$(K)$. We direct the reader to Lemma 6 from $\cite{Donovan}$ for details. From this point on, $H_0$ will always be an initial fixed metric on $E$ satisfying Lemma $\ref{deth}$, and $H$ will denote the metric on $E$ evolving along the flow \eqref{DHF}.

We conclude this section with a final computation of the heat operator, which we will need in the analysis to follow. As a first step, we introduce a Bochner type identity for the operator $P_A''$. We define the following Hodge-type Laplacian on  ${\rm End}(E)\otimes\Omega^{p,q}(X)$
\be
\Box=-i([\Lambda, \bar\pl]\pl_A+\pl_A[\Lambda,\bar\pl]).\nonumber
\ee
Because $X$ is not K\"ahler, this operator is not equivalent to the standard Laplace operator $\pl_A\pl_A^\dagger+\pl_A^\dagger\pl_A$, yet is is suitable for our purposes. Let $\b*\gamma$ denote any combination of the tensors $\b$ and $\gamma$, where the exact form is not necessary for future computations. 
\begin{lemma}
\label{Bochner}
For all $\al\in\Gamma({\rm End}(E)\otimes\Omega^{1,0}(X))$ the following Bochner identity holds
\be
\Box\al=P_A''\al+F*\al+R*\al+\nabla^{0,1} T*\al+T*\nabla^{0,1}\al,\nonumber
\ee
where $T$ is the torsion tensor of the Chern connection $\Gamma$ defined by $g$, and $R$ is its curvature.
\end{lemma}
Since many standard references for Bochner type identities only consider K\"ahler manifolds, we include a short proof here for completeness. 
\begin{proof}
First, note that for $\al\in\Gamma({\rm End}(E)\otimes\Omega^{1,0}(X))$, applying $\Box$ gives
\be
\Box\al=-i(\Lambda\bar\pl\pl_A\al+\pl_A\Lambda\bar\pl\al).\nonumber
\ee
Next, in a local coordinate chart we write $\al$ as
\be
\al=\al_j dz^j.\nonumber
\ee
Then in coordinates $-i\Lambda\bar\pl\al$ is explicitly given by $g^{j\bar k}\nabla_{\bar k}\al_j$, and 
\be
-i\pl_A\Lambda\bar\pl\al=(\pl_A)_p(g^{j\bar k}\nabla_{\bar k}\al_j) dz^p=g^{j\bar k}\nabla_p\nabla_{\bar k}\al_j dz^p.\nonumber
\ee
Here we have switched $\pl_A$ to the covariant derivative $\nabla^{1,0}$ since the connection term associated to $\Omega^{1,0}$ comes from the derivative $\pl_A$ landing on the metric $g^{j\bar k}$. Now, in local coordinates the endomorphism valued two from $\pl_A\al$ is given by
\be
\pl_A\al=((\pl_A)_p\al_j-(\pl_A)_j\al_p)dz^p\wedge dz^j.\nonumber
\ee
By introducing the connection terms for $\Omega^{1,0}$, we can rewrite the above expression using covariant derivatives
\be
\pl_A\al=(\nabla_p\al_j-\nabla_j\al_p+\Gamma_{pj}^m\al_m-\Gamma^m_{jp}\al_m)dz^p\wedge dz^j.\nonumber
\ee
Define the torsion tensor $T^m_{pj}$ by $\Gamma_{pj}^m-\Gamma^m_{jp}$. From here we see
\be
-i\Lambda \bar\pl\pl_A\al=g^{j\bar k}(\nabla_{\bar k}\nabla_j\al_p-\nabla_{\bar k}\nabla_p\al_j-\nabla_{\bar k}T^m_{pj}\al_m-T^m_{pj}\nabla_{\bar k}\al_m) dz^p.\nonumber
\ee
Thus putting everything together gives
\bea
\Box\al
&=&\left(g^{j\bar k}\nabla_{\bar k}\nabla_j\al_p-g^{j\bar k}[\nabla_{\bar k},\nabla_p]\al_j-g^{j\bar k}\nabla_{\bar k}T^m_{pj}\al_m-g^{j\bar k}T^m_{pj}\nabla_{\bar k}\al_m\right) dz^p.\nonumber
\eea
The first term above is none other than $P_A''\al$. The commutator term $[\nabla_{\bar k},\nabla_p]$ introduces the curvature terms and the lemma follows. 
\end{proof}

We now use the Bochner identity to prove the following lemma. Because the metric $g$ is fixed throughout the paper, we suppress $g$ from our subscript when denoting the norm of sections of ${\rm End}(E)\otimes \Omega^{p,q}$.

\begin{lemma}
\label{sweetness}
Assume the metric $H$ is bounded uniformly in $C^1$ along the Donaldson heat flow. Then we have the following point wise inequality
\be
\left(\frac d{dt}-P\right)|\pl_A(K_\t)|_H^2\leq C(1+|F|_H)|\pl_A(K_\t)|_H^2-\frac34\left(|\nabla^{0,1}\nabla^{1,0}(K_\t)|_H^2+|\nabla^{1,0}\nabla^{1,0}(K_\t)|^2_H\right).\nonumber
\ee
\end{lemma}
\begin{proof}
 First we compute the time derivative of $|\pl_A(K_\t)|_H^2$
\be
\frac d{dt}|\pl_A(K_\t)|_H^2=2\langle\dot\pl_A K_\t,\pl_A K_\t\rangle_H+2\langle \pl_A\dot K_\t, \pl_A K_\t\rangle_H+\langle [K_\t, \pl_A K_\t],\pl_A K_\t\rangle_H,\nonumber
\ee
where the last term comes from the time derivative hitting the metric $H$. Note the time derivative of the connection $\dot \pl_AK_\t$ is given by the commuator $[\pl_A K_\t, K_\t]$. Since $K_\t$ is bounded uniformly along the flow by Lemma \ref{supLF}, we have
\be
\frac d{dt}|\pl_A(K_\t)|_H^2\leq C|\pl_A(K_\t)|_H^2+2\langle \pl_A(P_A''K_\t), \pl_A K_\t\rangle_H-2\langle \pl_A([\t,\t^\dagger(K_\t)]), \pl_A K_\t\rangle_H.\nonumber
\ee
Note that $\t$ is fixed, and we assumed $H$ is uniformly bounded in $C^1$, giving us control of the connection along the flow and thus control of $\pl_A(\t)$. Furthermore, because our connection is unitary we have $\pl_A(\t^\dagger)=(\bar\pl \t)^\dagger=0$, since we assumed $\t$ to be holomorphic. Thus the final term above is bounded by $C|\pl_A(K_\t)|_H^2$. We turn to the second term on the right hand side above
\be
2\langle \pl_A(P_A'' K_\t), \pl_A K_\t\rangle_H.\nonumber
\ee
We apply the Bochner identity to the endomorphism valued one form $\pl_AK_\t$. Note that
\be
\Box\pl_A K_\t=-i(\Lambda \bar\pl\pl_A\pl_A K_\t+\pl_A\Lambda\bar\pl\pl_A K_\t)=\pl_A(P_A''K_\t)\nonumber
\ee
since $\pl_A^2=0$. Thus by the Bochner identity
\bea
2\langle \pl_A(P_A'' K_\t), \pl_A K_\t\rangle_H=2\langle \Box\pl_A K_\t, \pl_A K_\t\rangle_H&\leq&2\langle P_A''\pl_A K_\t, \pl_A K_\t\rangle_H+C|F|_H|\pl_A K_\t|_H^2\nonumber\\
&&+C|\pl_A K_\t|_H^2+C|\nabla^{0,1}\nabla^{1,0}K_\t|_H|\pl_AK_\t|_H\nonumber,
\eea
where for the last term we applied the Cauchy-Schwarz inequality to 
\be
\langle T*\nabla^{0,1}\pl_A K_\t, \pl_A K_\t\rangle_H.\nonumber
\ee
Now, applying Young's inequality $ab\leq a^2+b^2$ to $a=\frac12|\nabla^{0,1}\nabla^{1,0}K_\t|_H$ and $b=2C|\pl_AK_\t|_H$ gives
\be
C|\nabla^{0,1}\nabla^{1,0}K_\t|_H|\pl_AK_\t|_H\leq \frac1{4}|\nabla^{0,1}\nabla^{1,0}K_\t|_H^2+4C|\pl_A K_\t|_H^2.\nonumber
\ee
Putting everything together so far we see
\be
\frac d{dt}|\pl_A(K_\t)|_H^2\leq  2\langle P_A''\pl_A K_\t, \pl_A K_\t\rangle_H+C(1+|F|_H)|\pl_A(K_\t)|_H^2+\frac1{4}|\nabla^{0,1}\nabla^{1,0}K_\t|_H^2.\nonumber
\ee
Next we compute $P$ on $|\pl_A(K_\t)|_H^2$.
\bea
P\langle\pl_A K_\t,\pl_A K_\t\rangle_H&=&\langle P_A'\pl_A K_\t,\pl_AK_\t\rangle_H+|\nabla^{0,1}\pl_A K_\t|^2_{H}+|\nabla^{1,0}\pl_A K_\t|^2_{H}\nonumber\\
&&+\langle\pl_A K_\t,P_A''\pl_A K_\t\rangle_H\nonumber.
\eea
Note that interchanging $P_A'$ and $P_A''$ introduces terms with $K$, but since $K_\t$ is uniformly controlled and $h$ is in $C^0$ these extra terms can be absorbed into the $C|\pl_AK_\t|^2_H$ term. Thus
\be
\left(\frac d{dt}-P\right)|\pl_A(K_\t)|_H^2\leq C(1+|F|_H)|\pl_A(K_\t)|_H^2-\frac{3}{4}|\nabla^{0,1}\nabla^{1,0}K_\t|_H^2-|\nabla^{1,0}\nabla^{1,0} K_\t|^2_{H},\nonumber
\ee
and the lemma follows.
\end{proof}

\section{Convergence properties of the flow}

As stated in the introduction, the goal of this section is to prove Theorem $\ref{maintheorem}$ under the assumption that Tr$(h)$ is bounded in $C^0$ uniformly in time.  For the remainder of this section, we always assume our initial metric $H_0$ was chosen so that det$(h)=1$ along the flow. As a result, the bound on Tr$(h)$ implies every eigenvalue $\lambda_i$ of $h$ satisfies $0<c\leq \lambda_i\leq C$ uniformly. Many of the results in this section carry over from standard parabolic theory and the results of \cite{Donovan} with minor modifications. We include the important details here for the reader's convenience. 

We begin with the following Proposition, which sums up the key estimates needed to prove long time existence of the flow as well as Theorem $\ref{maintheorem}$.
\begin{proposition}
\label{Higher}
Let $H=H(t)$ be a solution of the Donaldson heat flow in the time interval $[0,T)$, where $T$ can either be a finite time or infinity. If there exists a constant $C_T$ so that ${\rm Tr}( h)\leq C_T$ for all $t\in[0,T)$, then for every $k\in \N$ there exists a constant $A_{k,T}$, depending only on $C_T$, $k$, and fixed initial data, so that $|h|_{C^k}\leq A_{k,T}$. \end{proposition}
Thus once we bound ${\rm Tr}( h)$ in $C^0$, all the higher derivative bounds for $h$ follow. The above proposition is proven in several steps, which are given below. Unless otherwise noted, $T$ can either be taken to be finite or infinity. 
\begin{proposition}
\label{boundS}
If ${\rm Tr}( h)\leq C_T$, then $|\pl_A h h^{-1}|_{H_0}^2\leq C$ for a constant $C$ depending only on $C_T$ and fixed initial data.
\end{proposition}
As shown in \cite{Siu}, $\pl_A h h^{-1}$ measures the difference of the two Chern connections $A-A_0$. The proof of the above proposition consists of several local computations and an application of the maximum principle. Because it does not make use of the global structure of $X$, the proof for $g$ Gauduchon follows from the K\"ahler case (see Section 3.2.1 of \cite{Donovan}), aside from two small details, which we now explain. Observe that the presence of the Higgs field creates an extra term on the right hand side of line (3.2.12) from  \cite{Donovan}, given by $\nabla_j [\t,\t^\dagger]_\eta$. As in the proof of Lemma \ref{sweetness}, $\nabla_j\t^\dagger=0$, so we only have to worry about the contribution of $\nabla_j\t$. However, $\t$ is fixed, so this term is controlled by the connection $A$, and can thus be bounded by $C|\pl_A h h^{-1}|_{H_0}$ and absorbed into an existing term. Furthermore, the application of the second Bianchi identity creates a torsion term, which can be dealt with using the Young's inequality trick from Lemma \ref{sweetness}. With this, the rest of the proof follows as in \cite{Donovan}.

\begin{proposition}
\label{2p}
If ${\rm Tr}( h)\leq C_T$ , then for any $1\leq p<\infty$ we have the following $W^{2,p}$ bound on the for $h$
\be
||h||_{W^{2,p}(H_0)}<C,\nonumber
\ee
where $C$ only depends on $C_T$ and fixed initial data. 
\end{proposition}

\begin{proof}
We begin the proof by recalling the standard formula relating the curvatures of different unitary-Chern connections (see \cite{TL, Siu})
\bea
\label{diffK}
K-K_0&=&{i\mkern1mu}\Lambda\bar\pl(h^{-1} \pl_{A_0}h)\\
&=&h^{-1}{i\mkern1mu}\Lambda\bar\pl\pl_{A_0}h-{i\mkern1mu}\Lambda h^{-1}\bar\pl hh^{-1}\pl_{A_0}h.\nonumber
\eea
Thus we have
\be
\label{bootstrap}
-P_{A_0}'' h=h(K_\t-K_0{}_\t)-h[\t,\t^\dagger]_\eta+h[\t,\t^{\dagger_0}]_\eta+{i\mkern1mu}\Lambda h^{-1}\bar\pl hh^{-1}\pl_{A_0}h.
\ee
An application of Lemma \ref{supLF}, Proposition \ref{boundS}, and the $C^0$ bound for $h$ proves the right hand side above is uniformly bounded in $C^0$. As a result $|P_{A_0}'' h|^2_{H_0}<C$. The proposition now follows from standard $L^p$ theory of elliptic PDE's.
\end{proof}

The preceding proposition shows that the curvature $F$ defined by $H$ is bounded in $L^p$ for any $p$. However, this does not extend to $p=\infty$, which we need for convergence. Thus we must work harder to prove higher regularity.

Define the function $Y(t):\R^+\ra \R^+$ by
\be
Y(t)=||K_\t-\lambda I||^2_{L^2(H)}.\nonumber
\ee
This function will play in important role in long time convergence of the flow. We need the following lemma
\begin{lemma}
\label{limit}
The function $Y(t)$ is non-increasing.
\end{lemma}
\begin{proof}
We begin by computing the time derivative of $Y(t)$
\bea
\dot Y(t)&=&2\int_X{\rm Tr}\left((K_\t-\lambda I)P_A''(K_\t)\right)\frac{\o^n}{n!}-2\int_X{\rm Tr}((K_\t-\lambda I)[\t,\t^\dagger(K_\t)]_\eta)\frac{\o^n}{n!}.\nonumber
\eea
Note that Tr$([\t,\t^\dagger(K_\t)])=0$. This fact, combined with the observation from Lemma \ref{supLF} that 
\be
\langle[\t,\t^\dagger(K_\t)]_\eta, K_\t\rangle_H=|\t(K_\t)|^2_{H\otimes\eta},\nonumber
\ee
gives the following
\bea
\dot Y(t)&=&2\int_X{\rm Tr}\left((K_\t-\lambda I)P_A''(K_\t)\right)\frac{\o^n}{n!}-2||\t(K_\t)||^2_{L^2(H\otimes\eta)}\nonumber\\
&=&-2||\bar\pl(K_\t)||^2_{L^2(H)}-2||\t(K_\t)||^2_{L^2(H\otimes\eta)}+2{i\mkern1mu}\int_X{\rm Tr}\left((K_\t-\lambda I)\pl_A(K_\t)\right)\wedge \frac{\bar\pl(\o^{n-1})}{(n-1)!}.\nonumber
\eea
The second line above follows from integration by parts. We need to show the second term on the right equals zero. To see this we apply the definition of Gauduchon
\bea
0&=&\int_X{\rm Tr}\left((K_\t-\lambda I)^2\right)\,\pl\bar\pl(\o^{n-1})\nonumber\\
&=&-\int_X\pl\,{\rm Tr}\left((K_\t-\lambda I)^2\right)\wedge \bar\pl(\o^{n-1})\nonumber\\
&=&-2\int_X{\rm Tr}\left((K_\t-\lambda I)\pl_A(K_\t)\right)\wedge\bar \pl(\o^{n-1}).\nonumber
\eea
Note that since our connection is unitary and $K_\t$ is self adjoint $||\bar\pl(K_\t)||^2_{L^2(H)}=||\pl_A(K_\t)||^2_{L^2(H)}$. Also, recall that the action of $\t$ on the endomorphism $K_\t$ is given by the commutator, so we have $||\t(K_\t)||^2_{L^2(H\otimes\eta)}=||[\t,K_\t]||^2_{L^2(H\otimes\eta)}$. As a result we see
\be
\label{Ydot}
\dot Y(t)=-2||\pl_A(K_\t)||^2_{L^2(H)}-2||[\t,K_\t]||^2_{L^2(H\otimes\eta)}\leq 0,
\ee
completing the proof of the Lemma
\end{proof}
This leads us to the following important proposition.
\begin{proposition}
\label{limit}
 If ${\rm Tr}( h)\leq C_T$, then $\pl_AK_\t$ is bounded in $L^2$ by a constant only depending on $C_T$ and fixed initial data. Furthermore, if $T=\infty$ and {\rm Tr}$(h)$ is bounded in $C^0$ for all time, then both $||\pl_A(K_\t)||^2_{L^2(H)}$ and $||[\t, K_\t]||^2_{L^2(H\otimes\eta)}$ approach zero as $t$ approaches infinity. 

\end{proposition}
\begin{proof}
Our first step is to prove the following differential inequality 
\be
\label{difft}
\frac d{dt}||\pl_A(K_\t)||^2_{L^2(H)}\leq C||\pl_A(K_\t)||^2_{L^2(H)}+C.
\ee
To begin, we integrate the main inequality from Lemma \ref{sweetness}, noting that the integral of $P|\pl_A(K_\t)|_H^2$ vanishes since $X$ is Gauduchon. This gives
\bea
\frac d{dt}||\pl_A(K_\t)||^2_{L^2(H)}&\leq& C||\pl_A(K_\t)||^2_{L^2(H)}+C\int_X|F|_H|\pl_A(K_\t)|_H^2\frac{\o^n}{n!}\nonumber\\
&&-\frac{3}{4}\left(||\nabla^{0,1}\nabla^{1,0}K_\t||_{L^2(H)}^2-||\nabla^{1,0}\nabla^{1,0} K_\t||^2_{L^2(H)}\right).\nonumber
\eea
Applying H\"older's inequality to the second term on the right hand side yields
\be
\label{interstep}
\int_X|F|_H|\pl_A(K_\t)|_H^2\frac{\o^n}{n!}\leq ||F||_{L^3(H)}||\pl_A(K_\t)||^2_{L^3(H)}.
\ee
Our assumption $|{\rm Tr}( h)|_{C^0}\leq C_T$ implies $h$ is in $W^{2,p}$ by Proposition \ref {2p}, which implies $ ||F||_{L^3(H)}$ is uniformly bounded in time. For notational simplicity let $|\nabla^{0,1}\nabla^{1,0}K_\t|_H+|\nabla^{1,0}\nabla^{1,0} K_\t|_H$ be denoted by $|\nabla^2K_\t|_H$, an expression which controls all second order derivatives of $K_\t$. We now prove an interpolation inequality, similar to that of Hamilton from \cite{Ham}, in order to conclude
\be
\label{interpolation}
||\pl_A(K_\t)||^2_{L^3(H)}\leq  C||K_\t||_{L^6(H)}\left(||\pl_A(K_\t)||_{L^2(H)}+||\nabla^2(K_\t)||_{L^2(H)}\right).
\ee
To see the above inequality, we first integrate by parts
\bea
\int_X|\pl_A K_\t|_H^3\frac{\o^n}{n!}&=&i\int_X|\pl_A K_\t|_H{\rm Tr}\left(\pl_AK_\t(\pl_A K_\t)^\dagger\right)\wedge\frac{\o^{n-1}}{{n-1}!}\nonumber\\
&=&-i\int_X\left(\pl|\pl_A K_\t|_H{\rm Tr}\left(K_\t(\pl_A K_\t)^\dagger\right)+|\pl_A K_\t|_H{\rm Tr}\left(K_\t(\bar\pl\pl_A K_\t)^\dagger\right)\right)\wedge\frac{\o^{n-1}}{{n-1}!}\nonumber\\
&&+i\int_X|\pl_A K_\t|_H{\rm Tr}\left(K_\t(\pl_A K_\t)^\dagger\right)\wedge\pl\left(\frac{\o^{n-1}}{{n-1}!}\right).\nonumber
\eea
The last term on the right introduces a torsion term, which is fixed and controlled by a constant $C$. Furthermore, by Kato's inequality $\pl|\pl_A K_\t|_H\leq |\nabla^{1,0}\nabla^{1,0}K_\t|_H$. Putting these two facts together gives
\be
\int_X|\pl_A K_\t|_H^3\frac{\o^n}{n!}\leq C\int_X|K_\t|_H|\pl_AK_\t|_H|\pl_A K_\t|_H\frac{\o^n}{n!}+ C\int_X|K_\t|_H|\pl_AK_\t|_H|\nabla^2K_\t|_H\frac{\o^n}{n!}.\nonumber
\ee
Applying H\"older's inequality to both integrals on the right we see
\be
\int_X|\pl_A K_\t|_H^3\frac{\o^n}{n!}\leq C||K_\t||_{L^6}||\pl_A K_\t||_{L^3}||\pl_AK_\t||_{L^2}+ C||K_\t||_{L^6}||\pl_AK_\t||_{L^3}||\nabla^2K_\t||_{L^2}.\nonumber
\ee
Dividing both sides by $||\pl_AK_\t||_{L^3}$ proves \eqref{interpolation}. Combining \eqref{interstep} and \eqref{interpolation}, along with Lemma \ref{supLF}, gives
\bea
C\int_X|F|_H|\pl_A(K_\t)|_H^2\frac{\o^n}{n!}&\leq&  C\left(||\pl_A(K_\t)||_{L^2(H)}+||\nabla^2(K_\t)||_{L^2(H)}\right)\nonumber\\
&\leq&||\pl_A(K_\t)||_{L^2(H)}^2+\frac14||\nabla^2(K_\t)||_{L^2(H)}^2+4C^2,\nonumber
\eea
which implies \eqref{difft}.

To achieve the desired $L^2$ bound for $\pl_AK_\t$ for finite time $T$, note that equation \eqref{difft} implies that the function $||\pl_A(K_\t)||^2_{L^2(H)}$ grows at most exponentially in time (see Proposition 8 from \cite{J} for details), giving the following bound:
\be
||\pl_A(K_\t)||^2_{L^2(H)}\leq C e^{kT},\nonumber
\ee
for constants $k$, $C$ only depending on $C_T$ and fixed initial data. The $L^2$ bound for $\pl_AK_\t$ for time $T=\infty$ follows from the second part of the proposition, namely that $||\pl_A(K_\t)||^2_{L^2(H)}$ and $||[\t, K_\t]||^2_{L^2(H\otimes\eta)}$ approach zero as $t$ approaches infinity, which we now demonstrate. 

Define the function $f(t)=||\pl_A(K_\t)||^2_{L^2(H)}+||[\t, K_\t]||^2_{L^2(H\otimes\eta)},$ and assume a solution to the Donaldson heat flow exists for all time. We now integrate $f(t)$ in time from zero to infinity. By \eqref{Ydot} we have
\be
\int_0^\infty f(t)dt=-\frac12 \int_0^\infty \dot Y(t)dt=\frac12Y(0)-\frac12\lim_{b\ra\infty}Y(b).\nonumber
\ee
Since $Y(t)$ is positive the right hand side of the above inequality is bounded. Thus there must exist a subsequence of times $t_k$, such that $t_k<t_{k+1}<t_k+2$, where $f(t_k)$ goes to zero. In fact, if we can can demonstate
\be
\dot f\leq Cf+C,\nonumber
\ee
then it will follow that $f(t)$ goes to zero along any subsequence of times (see \cite{PSSW} for details). Given \eqref{difft}, to prove the above inequality is suffices to show
\be
\label{show this}
\pl_t ||[\t,K_\t]||^2_{L^2(H\otimes\eta)}\leq C||\pl_A(K_\t)||^2_{L^2(H)}+C.\nonumber
\ee
First we compute the time derivative of $|[\t,K_\t]|_{H\otimes\eta}^2.$
\be
\frac d{dt}|[\t,K_\t]|_{H\otimes\eta}^2=\langle[\t,\dot K_\t],[\t,K_\t]\rangle_{H\otimes\eta}+\langle [\t,K_\t], \[\t,\dot K_\t]\rangle_{H\otimes\eta}+\langle [K_\t, [\t,K_\t]],[\t,K_\t]\rangle_{H\otimes\eta}.\nonumber
\ee
The last term on the right, which comes from the time derivative hitting the metric $H$, is bounded by a constant $C$ by Lemma \ref{supLF}. This gives
\be
\frac d{dt}|[\t,K_\t]|_{H\otimes\eta}^2\leq \langle[\t,P_A'' K_\t-[\t,\t^\dagger(K_\t)]],[\t,K_\t]\rangle_{H\otimes\eta}+\langle [\t,K_\t], [\t,P_A'' K_\t-[\t,\t^\dagger(K_\t)]]\rangle_{H\otimes\eta}+C.\nonumber
\ee
Again by Lemma \ref{supLF} and the $C^0$ bound for $h$ the terms involving $[\t,\t^\dagger(K_\t)]$ are controlled. Furthermore $[\t,\t^\dagger]_\eta$ is controlled, so we can change $P_A''$ to $P_A'$ at the cost  of introducing a commutator with $K$, which is now bounded by Lemma \ref{supLF}. Thus we have
\be
\frac d{dt}|[\t,K_\t]|_{H\otimes\eta}^2\leq \langle[\t,P_A'K_\t],[\t,K_\t]\rangle_{H\otimes\eta}+\langle [\t,K_\t], [\t,P_A''K_\t]\rangle_{H\otimes\eta}+C.\nonumber
\ee
We now apply the operator $P$ to $|[\t,K_\t]|_{H\otimes\eta}^2$ 
\be
P|[\t,K_\t]|_{H\otimes\eta}^2\leq\langle P_A'[\t, K_\t],[\t,K_\t]\rangle_{H\otimes\eta}+\langle [\t,K_\t], P_A''[\t, K_\t]\rangle_{H\otimes\eta}.\nonumber
\ee
When computing $P_A'[\t,K_\t]$, we get three types of terms. First, the terms where both derivatives land on $\t$. Since $\t$ is holomorphic, $P_A'\t=g^{j\bar k}\nabla_j\nabla_{\bar k}\t=0$. Thus
\be
P_A''\t=P_A'\t+[K,\t]=[K,\t],\nonumber
\ee
and these terms are controlled. Second, we get mixed terms, where one derivative lands on $\t$ and another lands on $K_\t$. We know $\nabla\t$ is bounded since $\t$ is fixed and the connection term involves at most one derivative of $H$, and $H$ is in $C^1$ by Proposition \ref{boundS}. Thus these terms are controlled by $C|\pl_A K_t|^2_{H\otimes\eta}$. Finally, if both derivatives land on $K_\t$, then we get $[\t,P_A'K_\t]$, which precisely cancels with the time derivate terms. Putting everything together we see
\be
\left(\frac d{dt}-P\right)|[\t,K_\t]|_{H\otimes\eta}^2\leq C|\pl_A K_\t|^2_{H\otimes\eta}+C.\nonumber
\ee
Integrating the above inequality and applying the Gauduchon condition proves \eqref{show this}. Thus $f(t)$ goes to zero strongly in $L^2$, and the proof of the proposition is complete.

\end{proof}

We can now use the $L^2$ bound for $\pl_AK_\t$ to show that in fact $|\pl_A K_\t|^2_{H}$ is bounded in $C^0$. In Lemma \ref{sweetness} we saw
\be
\left(\frac d{dt}-P\right)|\pl_A(K_\t)|_H^2\leq C(1+|F|_H)|\pl_A(K_\t)|_H^2.\nonumber
\ee
Since $C(1+|F|_H)$ is bounded in $L^p$ for any $1<p<\infty$, one can follow the exact parabolic Moser iteration argument from \cite{CJ} to prove $|\pl_A K_\t|^2_{H}$ is bounded in $C^0$.

We have thus shown $K_\t$ is bounded in $C^1$. Furthermore, because $h\in W^{2,p}$ for any $p$, by the Sobolev embedding theorem $h\in C^{1,\al}$ for $\al>0$. Thus, returning to \eqref{bootstrap}, we see that $P_{A_0}'' h$ is bounded in $C^\al$. As a result $h\in C^{2,\al}$, which implies $F\in C^\al$. In fact, once we have $F\in C^0$, higher order bounds for $h$ can be achieved by following standard parabolic theory (see \cite{W2} and the argument given in \cite{Donovan}). For the sake of completeness, we provide a short sketch of the higher order estimates following the same outline as our previous arguments.

So far we proven that $h\in C^{2,\al}$ and $K_\t\in C^1$, using the fact that $h\in C^{1,\al}$ and $K_\t\in C^0$. To obtain higher order estimates, we assume $h\in C^{k,\al}$ and $K_\t\in C^{k-1}$. By equation \eqref{bootstrap} we see that $h\in W^{k+1,p}$ for any $1\leq p<\infty$. Recall that $\nabla=\nabla^{1,0}+\nabla^{0,1}$ denotes the Chern connection on all associated bundles of $E$. As a result $\nabla^k$ denotes taking $k$ covarient derivatives, where each derivative includes the appropriate connection terms for the space it is acting on. One can prove the following inequality 
\be
\label{inequality123}
\left(\frac d{dt}-P\right)|\nabla^k(K_\t)|^2_H\leq  C(1+|\nabla^{k-1} F|_H)|\nabla^k(K_\t)|^2_H-\frac34|\nabla^{k+1}(K_\t)|^2_H
\ee
using the same argument as the one given in Lemma \ref{sweetness}. The main idea is that when computing $\frac d{dt}|\nabla^k(K_\t)|^2_H$, the time derivative can land on either connection terms or $K_\t$. The time derivative of the connection terms produces terms with at most $k$ derives on $K_\t$, which are controlled. When the time derivative hits $K_\t$ it produces a term of the form $\nabla^kP_A''K_\t$. Interchanging the order of derivatives (which lies at the heart of the Bocher identity from Lemma \ref{Bochner}) produces curvature terms and torsion terms, and a term of the form $P_A''\nabla^kK_\t$ which cancels when subtracting $P|\nabla^k(K_\t)|^2_H$. The most derivatives that can land on the curvature $F$ is $k-1$, and the highest order torsion terms can be controlled using the Young's inequality trick from Lemma \ref{sweetness}. All lower order terms are bounded by assumption, and the proof of \eqref{inequality123} follows. Now, the $W^{k+1,p}$ bound for $h$ implies that $C(1+|\nabla^{k-1} F|_H)$ is in $L^p$. Thus, to conclude $K_\t\in C^k$ via parabolic Moser iteration, we only need to show $\nabla^k(K_\t)$ is in $L^2$. Integrating \eqref{inequality123} and applying the higher order analogue of the interpolation inequality \eqref{interpolation} a few times (see \cite{Ham, Donovan}), allows one to prove
\be
\frac d{dt}||\nabla^k(K_\t)||^2_{L^2(H)}\leq -c ||\nabla^k(K_\t)||^2_{L^2(H)}+C\nonumber
\ee
(see the proof of Lemma 10 from \cite{Donovan} for details). Thus $\nabla^k(K_\t)$ is in $L^2$ for all time, and applying parabolic Moser iteration to \eqref{inequality123} gives $|\nabla^k(K_\t)_H|$ is bounded in $C^0$. Thus we have $K_\t\in C^k$, and trivially $K_\t\in C^{k-1,\al}$, so equation \eqref{bootstrap} gives $h\in C^{k+1,\al}$, completing the bootstrap step. We have thus proven Proposition \ref{Higher}.

We can now prove long time existence
\begin{proposition}
Let $H_0$ be an initial metric suitably normalized so that ${\rm det}(h)=1$ along a solution to the Donaldson heat flow \eqref{DHF}. Then a solution to \eqref{DHF} exists for all time $t\in[0,\infty)$.  
\begin{proof}
Because equation \eqref{DHF} is fully parabolic, a solution exists for short time by standard parabolic theory \cite{Lib}. Thus we need to prove long time existence. Suppose that a solution only exists for $t\in[0,T)$ for some finite time $T$. Furthermore suppose $H(t)$ converges in $C^0$ to a metric $H_T$ as $t\ra T$. 
Then ${\rm Tr}(h)\leq C_T$ for some constant $C_T$ independent of $T$, and by Proposition \ref{Higher} we have bounds for all higher order derivatives of $H$. Thus, by taking subsequences, we have smooth convergence of $H(t)$ to $H_T$ and as a result $H_T$ is smooth. Short time existence now allows us to continue the flow to the interval $[0, T+\epsilon).$ 

To see that $H(t)$ converges in $C^0$ to a metric $H_T$ as $t\ra T$, we direct the reader to Proposition 13 from \cite{Don1} as this proof carries over to the Gauduchon case. Additionally, one can see the $C^0$ bound for ${\rm Tr}(h)$ for finite time directly from the flow equation \eqref{DHF}. We have
\be
\frac{d}{dt}{\rm Tr}(h)=-{\rm Tr}\left(h(K_\t-\lambda I)\right)\leq |h|_H|K_\t-\lambda I|_H\leq C{\rm Tr}(h),\nonumber
\ee
using Lemma \ref{supLF} and the fact that all the eigenvalues of $h$ are positive. Then as in the proof of Proposition \ref{limit}, ${\rm Tr}(h)$ grows at most exponentially and is thus bounded for finite time. 
\end{proof}

\end{proposition}
We are now ready to prove Theorem $\ref{maintheorem}$, under the assumption that ${\rm Tr}(h)$ is bounded in $C^0$ for all time.
\begin{proof}
Let $t_i$ be a subsequence of times along the Donaldson heat flow. We assume that there exists a constant $C_\infty$ independent of time so that ${\rm Tr}(h)\leq C_\infty$. Then by Proposition \ref{Higher} we know there exists constants $A_{k,\infty}$ so that $|h|_{C^k}\leq A_{k,\infty}$ for every $k\in \N$.  Thus for each $k$ by the Arzel\`a-Ascoli theorem the metrics $H(t_i)=H_0h(t_i)$ converge in $C^{k-1}$ along a subsequence (still denoted $t_i$) to a limiting metric $H(t_\infty)$. Higher order derivates of this limiting metric are well defined, so in particular $K_\t(t_\infty)$ is well defined and 
\be
\pl_{A_i}K_\t(t_i)\longrightarrow \pl_{A_\infty}{K_\t(t_\infty)}\nonumber
\ee
in $C^{k-4}$. By Proposition \ref{limit} both $||\pl_{A_i}(K_\t(t_i))||_{L^2(H)}^2$ and $||[\t, K_\t(t_i)]||^2_{L^2(H\otimes\eta)}$ go to zero strongly, which implies $\pl_{A_\infty}(K_\t(t_\infty))=[\t, K_\t(t_\infty)]=0$. In fact, because $K_\t(t_i)$ is Hermitian with respect to $H(t_i)$, we also see that $\bar\pl (K_\t(t_\infty))=0$. We now sketch a short proof that stability implies $K_\t$ is a constant multiple of the identity.

Let $u$ be a locally constant Hermitian endomorphism which satisfies $[u,\t]=0$. Assume via contradiction that $u$ is not a constant multiple of the identity. Then there exists an eigenvalue $a$ of $u$ such that $f:=u-aI$ is nonzero. Since $f$ is locally constant both the image $Im(f)$  and the kernel $Ker(f)$ are proper holomorphic subbundles of $E$, and because $[f,\t]=0$ we know that both subbundes are preserved by the Higgs field $\t$. This violates stability, since we can identify $Im(f)$ with $E\backslash Ker(f)$, and as a result it is impossible for both $Ker(f)$ and $Im(f)$ to have slope strictly less than the slope of $E$. 

It follows that $K_\t(t_\infty)$ is a constant multiple of the identity. Because degree is independent of metric this constant multiplier must be $\lambda$. Thus we have constructed a solution to \eqref{HE equation}.

For the ``only if'' part of the proof of Theorem $\ref{maintheorem}$, suppose that the Donaldson heat flow converges along a subsequence of times to a solution of \eqref{HE equation}. Then the stability of the pair $(E,\t)$ is a special case of Theorem 3.3 from \cite{TL2}. 
\end{proof}

\section{The $C^0$ bound from stability}
In this section we prove Tr$(h)$ is uniformly bounded in $C^0$ along the Donaldson heat flow, under the assumptions that $g$ is Gauduchon and $(E,\t)$ is stable. This step is perhaps the most geometrically meaningful, since we have to use the algebraic-geometric condition of stability to prove a uniform bound along a PDE. Simpson proves this bound in the K\"ahler case in Proposition 5.3 from $\cite{Simp}$. Let $M(t):=M(H_0,H(t))$ denote Donaldson's functional (see \cite{Don1,Simp,Kob} for details) along the path of metrics $H(t)$. Simpson proves the following:
\begin{proposition}
Let $E$ be a stable vector bundle over a K\"ahler manifold $X$. If $h(t)=e^{s(t)}$ evolves by the Donaldson heat flow, then for all time
\be
\label{simps}
\sup_X|s|\leq C_1+C_2M(t).
\ee
\end{proposition}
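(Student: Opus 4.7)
The plan is to follow Simpson's two-step strategy, adapted to the Gauduchon Higgs setting. The first step establishes the intermediate estimate $\sup_X |s| \leq C_1 \|s\|_{L^1} + C_2 M(t)$ via a subharmonicity argument. One computes the evolution of ${\rm Tr}(h^p)$ along the Donaldson heat flow and checks that $\hat\Delta_D |s|$ (or a smooth approximation thereof) is bounded below by $-C$, using both the uniform bound $\sup_X |\Lambda F_\t| < C$ from \eqref{supLF} and the fact that the Hermitian-Einstein defect $\Lambda F_\t - \mu(E) I$ integrates to produce $M(t)$. A standard Moser iteration then converts the $L^1$ bound to an $L^\infty$ bound; the Gauduchon torsion contributions enter only at lower order in the Laplacian computation and are absorbed using Corollary~\ref{Gaud}.

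The second step is the contradiction argument. Suppose the conclusion fails; by Step 1 one may pass to a sequence of times $t_i$ with $\ell_i := \|s_i\|_{L^1} \to \infty$ and $M(t_i)/\ell_i \to 0$. Normalize $u_i := s_i/\ell_i$, so $\|u_i\|_{L^1} = 1$ and ${\rm Tr}(u_i) = 0$. The key inequality, derived by integrating the Gauduchon variational formula of Lemma~\ref{hardderivativeDF} along the straight-line path $h_u = e^{u s_i}$, should take the form
\[
\int_X {\rm Tr}\bigl(u_i(\Lambda \hat F_\t - \mu(E) I)\bigr)\,\o^n + \ell_i \int_X \langle \Psi_{\ell_i}(u_i)(D'' u_i), D' u_i \rangle\,\o^n \le \frac{M(t_i)}{\ell_i} + R_i,
\]
where $\Psi_\ell(x,y) = (e^{\ell(y-x)} - \ell(y - x) - 1)/(y-x)^2$ is Simpson's kernel and $R_i$ collects the torsion remainder produced by the mixed $\partial$-$\bar\partial$ exchange in the Gauduchon computation. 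I would control $R_i$ using Corollary~\ref{Gaud}, Proposition~\ref{boundS}, and the $L^p$ bounds on the curvature, forcing $R_i \to 0$. As $\ell_i \to \infty$, $\Psi_{\ell_i}$ blows up on $\{y \le x\}$, so the Dirichlet-type term on the left forces uniform $L^2_1$ bounds on $u_i$ and yields a non-trivial weak limit $u_\infty$.

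The limit $u_\infty$ is self-adjoint, trace-free, of operator norm at most one, and its spectral cutoffs $\pi_\gamma = p_\gamma(u_\infty)$ satisfy $(I - \pi_\gamma) D'' \pi_\gamma = 0$ weakly; since $D'' = \bar\pl + \t$, these sub-bundles are automatically $\t$-invariant. The Uhlenbeck-Yau regularity theorem then promotes each $\pi_\gamma$ to a coherent sub-Higgs sheaf $\F_\gamma \subset E$. Applying the Chern-Weil formula for Gauduchon manifolds from Section~\ref{HB}, together with a telescoping identity over the spectrum of $u_\infty$ and the key inequality above, produces
\[
\sum_\gamma (\lambda_\gamma - \lambda_{\gamma-1})\bigl(\mu(E)\,{\rm rk}(\F_\gamma) - {\rm deg}(\F_\gamma)\bigr) \le 0,
\]
whereas stability of $E$ forces each summand to be strictly positive, a contradiction.

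The hardest step is the key inequality in the Gauduchon setting: because $M$ is not the gradient of the flow, one cannot appeal to monotonicity, and Lemma~\ref{hardderivativeDF} produces torsion remainders along the linear path that must be shown to vanish in the limit $i \to \infty$. The strategy will be to rewrite these remainders, using Corollary~\ref{Gaud} and integration by parts, as integrals against quantities controlled by $M(t_i)/\ell_i$ itself (mimicking the $L^2$ convergence argument in the preceding section). A secondary technical point is verifying that the Uhlenbeck-Yau regularity theorem survives the switch from $\bar\pl$ to $D''$ on a Gauduchon base; since the theorem is essentially a local Sobolev statement about weakly holomorphic projections, this should go through with only cosmetic changes, once one observes that $D''$-flatness of $\pi$ coincides with $\bar\pl$-flatness together with $\t$-invariance.
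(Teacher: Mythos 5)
You are attempting to prove the one statement in Section 5 that the paper itself does not prove: this proposition is quoted from Simpson (\cite{Simp}, Proposition 5.3) in the K\"ahler case, the paper remarks only that the proof carries over to semi-K\"ahler metrics, and then explicitly abandons it for Gauduchon metrics (``we cannot hope to generalize this result''), obtaining the $C^0$ bound instead from the parabolic Uhlenbeck--Yau contradiction argument of Proposition \ref{finalprop}. Your Step 1 (the $L^1$-to-$L^\infty$ estimate via $\Delta\,{\rm log\,Tr}(h)\ge -C(|\Lambda F_\t|+|\Lambda\hat F_\t|)$, the bound \eqref{supLF}, and a Green's function or Moser argument) is sound and is indeed used by the paper in Lemma \ref{DNM}. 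The endgame of your Step 2 (weak $L^2_1$ limit, spectral cutoffs, Uhlenbeck--Yau regularity, Chern--Weil, contradiction with stability) is also structurally the same as Proposition \ref{finalprop}. The problem is the bridge between them: the key inequality bounding the $\Psi$-kernel Dirichlet term by $M(t_i)/\ell_i$.

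There are two concrete gaps. First, your control of the torsion remainder $R_i$ invokes Proposition \ref{boundS} and the $L^p$ curvature bounds; but both of those are proved in the paper under the standing hypothesis $||{\rm Tr}(h)||_{C^0}\le C$, which is precisely the conclusion this proposition is supposed to deliver. As written the argument is circular. Nor is the repair obvious: the torsion terms produced by Lemma \ref{hardderivativeDF} along the path $h_u=e^{us_i}$ involve first derivatives of $h_u$ integrated against $\overline{T_k{}^p{}_p}$ with no sign, and the device the paper uses to dispose of the analogous terms in the $L^2$-convergence argument --- identifying them with $\dot Y(t)$ via the flow equation --- is unavailable here, because $e^{us_i}$ is a variational path rather than the flow. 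Second, and this is the paper's stated reason for discarding \eqref{simps}: even if the estimate held on a Gauduchon base it would not yield the needed $C^0$ bound, since the flow is not the gradient flow of $M$ in the Gauduchon case, so $M(t)$ is not known to be bounded above along the flow and the right-hand side of \eqref{simps} is not uniformly controlled in $t$. If the goal is the $C^0$ bound, the correct move is to drop $M$ entirely and run the contradiction argument directly on the normalized endomorphisms $\ti h_i=h_i/\sup_X{\rm Tr}(h_i)$, using only \eqref{supLF}, Corollary \ref{Gaud}, and the Chern--Weil formula, as in Proposition \ref{finalprop}.
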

This proposition is attractive not only because it gives the desired bound on $h$, but it also gives an explicit lower bound on the Donaldson functional $M(t)$ that does not require existence of any canonical metric. However, in the case that $g$ is Gauduchon, we cannot generalize this result, since Simpson uses a form of the Donaldson functional given by integration by parts which we do not have access to.

Instead we adapt the $C^0$ bound from the elliptic approach of Uhlenbeck and Yau, suitably modified to fit our parabolic case. We note that both Simpson's result above, and the following proposition, rely on the theorem of Uhlenbeck and Yau that a weakly holomorphic $L^2$ subsheaf defines a coherent subsheaf. \begin{proposition}
\label{finalprop}
Let $H(t)$ be a solution of \eqref{DHF}, with $H_0$ suitably normalized so that ${\rm det}(h)=1$. Set 
\be
m(t):=\sup_X{\rm Tr}(h(t)).\nonumber
\ee
Suppose there does not exists a constant $C$ such that $m(t)<C$ uniformly in $t$. Then $(E,\t)$ is not stable.

\end{proposition}

We prove this proposition by contradiction, and assume no such constant exists. Define normalized endomorphisms $\ti h(t)={h(t)}/{m(t)}$ and consider $\ti h^\sigma(t)$ for any $0<\sigma\leq 1$.  We prove uniform $W^{1,2}$ bounds for $\ti h^\sigma(t)$, allowing us to construct  a weak limit after carefully selecting subsequences along the flow, which is important for several estimates. This weak limit is then used to construct a destabilizing subsheaf of $E$, contradicting stability of $E$.

First consider the following inequality, stated in Lemma (3.4.4) from \cite{TL}, which holds for $0<\sigma\leq 1$, and is in fact equality when $\sigma=1$.
\be
\label{int1}
|h^{-\frac\sigma2}\pl_{A_0} h^\sigma|^2_{H_0}\leq{i\mkern1mu}\Lambda{\rm Tr}(h^{-1}\pl_{A_0}h \bar\pl h^\sigma).
\ee
Note that we choose to work with the fixed covariant derivative $\pl_{A_0}$ and fixed metric $H_0$. The above inequality also uses the fact that $h$ is Hermitian with respect to $H_0$. Integrating the above inequality and integrating by parts yields
\bea
\label{aaa}
\int_X|h^{-\frac\sigma2}\pl_{A_0} h^\sigma|^2_{H_0}\,\frac{\o^n}{n!}&\leq&i\int_X{\rm Tr}(\bar\pl(h^{-1}\pl_{A_0} h)h^\sigma)\wedge\frac{\o^{n-1}}{(n-1)!}-i\int_X{\rm Tr}(h^{\sigma-1}\pl_{A_0}h)\wedge\frac{\bar\pl(\o^{n-1})}{(n-1)!}.\nonumber
\eea
Note the second term on the right vanishes since $X$ is Gauduchon
\bea
\int_X{\rm Tr}(h^{\sigma-1}\pl_{A_0}h)\wedge\bar\pl(\o^{n-1})&=&\frac1\sigma\int_X\pl{\rm Tr}(h^\sigma)\wedge{\bar\pl(\o^{n-1})}=-\frac1\sigma\int_X{\rm Tr}(h^\sigma)\,\pl\bar\pl(\o^{n-1})=0.\nonumber
\eea
Thus, applying formula \eqref{diffK}, we have
\be
\label{first}
\int_X|h^{-\frac\sigma2}\pl_{A_0} h^\sigma|^2_{H_0}\,\frac{\o^n}{n!}\leq\int_X{\rm Tr}((K-K_0)h^\sigma)\frac{\o^n}{n!}.
\ee
Now, we would like to bound the right hand side above by the $L^1$ norm of $h^\sigma$. However, Lemma \ref{supLF} gives a bound for the $C^0$ norm of $K_\t$ as opposed to $K$. To account for this, using the commutator and the properties of trace we have
\be
\label{tracetheta}
{\rm Tr}([\t,\t^\dagger]_\eta h^\sigma-[\t,\t^{\dagger_0}]_\eta h^\sigma)=\langle h^{-1}\t^{\dagger_0}(h),\t^\dagger(h^\sigma) \rangle_{H_0\otimes\bar\eta}.
\ee
We claim the following inequality
\be
\label{smallgoal}
|h^{-\frac\sigma2}\t^{\dagger_0}(h^\sigma)|^2_{H_0\otimes\bar\eta}\leq\langle h^{-1}\t^{\dagger_0}(h),\t^{\dagger_0}(h^\sigma) \rangle_{H_0\otimes\bar\eta},
\ee
again with equality in the case of $\sigma=1$. To see this, following the proof of Lemma (3.4.4) from \cite{TL}, fix a local frame for $E$ so that $h$ is diagonal with eigenvalues $e^{\lambda_i}$, with $1\leq i\leq rk(E)$. In this frame $\t^{\dagger_0}$ has a matrix representation where each entry of the matrix, denoted $\tau_{ij}$, is given by a section of $\bar V_0$. We then have
\be
|h^{-\frac\sigma2}\t^{\dagger_0}(h^\sigma)|^2_{H_0\otimes\bar\eta}=\sum_{i\neq j}e^{\sigma\lambda_i}(e^{\sigma\lambda_j}-e^{\sigma\lambda_i})^2|\tau_{ij}|^2_{\bar \eta},\nonumber
\ee
as well as the equality
\be
\langle h^{-1}\t^{\dagger_0}(h),\t^{\dagger_0}(h^\sigma) \rangle_{H_0\otimes\bar\eta}=\sum_{i\neq j}(e^{\lambda_j-\lambda_i}-1)(e^{\sigma\lambda_j}-e^{\sigma\lambda_i})|\tau_{ij}|^2_{\bar\eta}.\nonumber
\ee
Then inequality \eqref{smallgoal} follows from the fact that
\be
(e^\mu-1)(e^{\sigma\mu+\sigma\lambda}-e^{\sigma\lambda})\geq e^{-\sigma\lambda}(e^{\sigma\mu+\sigma\lambda}-e^{\sigma\lambda})^2\nonumber
\ee
for all real numbers $\lambda, \mu, \sigma$ with $0<\sigma\leq 1$. Combining \eqref{first} and \eqref{smallgoal} gives
\be
\int_X|h^{-\frac\sigma2}\pl_{A_0} h^\sigma|^2_{H_0}+|h^{-\frac\sigma2}\t^{\dagger_0}(h^\sigma)|^2_{H_0\otimes\bar\eta}\,\frac{\o^n}{n!}\leq\int_X{\rm Tr}((K_\t-K_0{}_\t)h^\sigma)\frac{\o^n}{n!}.\nonumber
\ee
Now, because $h$ is Hermitian with respect to $H$ (in addition to $H_0$), we have
\be
{\rm Tr}((K_\t-K_0{}_\t)h^\sigma)=\langle K_\t-K_0{}_\t,h^\sigma\rangle_H\leq|K_\t-K_0{}_\t|_H|h^\sigma|_H\nonumber
\ee
by the Cauchy-Schwarz inequality. We know $|K_\t|_H$ is bounded by Lemma \ref{supLF}, and because $h$ is Hermitian with respect to both $H$ and $H_0$ we know $|h^\sigma|_H=|h^\sigma|_{H_0}=\sqrt{{\rm Tr}(h^{2\sigma})}$. Thus
\be
\int_X|h^{-\frac\sigma2}\pl_{A_0} h^\sigma|^2_{H_0}+|h^{-\frac\sigma2}\t^{\dagger_0}(h^\sigma)|^2_{H_0\otimes\bar\eta}\,\frac{\o^n}{n!}\leq C\int_X{\rm Tr}|h^\sigma|_{H_0}\frac{\o^n}{n!}.\nonumber
\ee
Up to now we have only considered the unnormalized endomorphisms $h$, however one can divide both sides by $m^\sigma$ and the above inequality holds for $\ti h$. By definition of the normalization we have $\ti h\leq I$, which in turn implies $\ti h^{-\frac{\sigma}{2}}\geq I$. It follows that
\be
\label{bound}
\int_X|\pl_{A_0} \ti h^\sigma|^2_{H_0}+|\t^{\dagger_0}(\ti h^\sigma)|^2_{H_0\otimes\bar\eta}\,\frac{\o^n}{n!}\leq\int_X|\ti h^{-\frac\sigma2}\pl_{A_0} \ti h^\sigma|^2_{H_0}+|\ti h^{-\frac\sigma2}\t^{\dagger_0}(\ti h^\sigma)|^2_{H_0\otimes\bar\eta}\,\frac{\o^n}{n!}\leq C.
\ee
Thus for all $\sigma$, we have $\ti h^\sigma$ is bounded in $W^{1,2}$ uniformly in time, and this bound is independent of $\sigma$. Before we take a weak limit, we first carefully choose a subsequence of times along the flow. Define a sequence of powers $\sigma_j$ so that $\sigma_j\ra 0$ as $j\ra\infty$. For a fixed $j$ we define the function $f_j(t):\R\longrightarrow\R$ by $f_j(t):=\int_X{\rm Tr}(h^{\sigma_j})$. We argue now that because $m(t)$ is unbounded, the function $f_j(t)$ is unbounded as well.

We now turn to the following lemma
\begin{lemma}
\label{DNM}
Along the Donaldson heat flow, the following inequality holds uniformly in time
\be
m(t)\leq C\int_X{\rm Tr}(h)\frac{\o^n}{n!}.\nonumber
\ee
\end{lemma}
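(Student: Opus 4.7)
The plan is to reduce the lemma to a sup-$L^1$ (Harnack-type) estimate for the scalar function $u := {\rm Tr}(h)$ that is uniform in time along the Donaldson heat flow. Once we establish $\sup_X u \le C \|u\|_{L^1(X)}$ with $C$ independent of $i$, dividing by $M_i$ and using that $|\ti h_i|$ is pointwise comparable to ${\rm Tr}(\ti h_i)$ (since $\ti h_i$ is positive definite Hermitian) immediately yields the claimed inequality.

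The key step is a pointwise elliptic subsolution inequality for $u$. Starting from
\[
\hat\Delta_D h = h(\Lambda\hat F_\t - \Lambda F_\t) + g^{j\bar k}(D''_{\bar k}h)h^{-1}(\hat D'_j h),
\]
already established in the proof of the $L^p$-bound for $F_\t$, I take the trace of both sides. Since the connection $D$ induced on ${\rm End}(E)$ preserves trace (the commutator contributions from $\t,\t^\dagger$ are traceless), one has ${\rm Tr}(\hat\Delta_D h) = \Delta\,{\rm Tr}(h)$, where $\Delta = g^{j\bar k}\pl_j\pl_{\bar k}$ is the complex Laplacian from Lemma~\ref{deth}. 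Thus
\[
\Delta u \,=\, {\rm Tr}\bigl(h(\Lambda\hat F_\t - \Lambda F_\t)\bigr) + g^{j\bar k}{\rm Tr}\bigl((D''_{\bar k}h)h^{-1}(\hat D'_j h)\bigr).
\]
The gradient term is non-negative: since $\t$ is a $(1,0)$-form it contributes nothing to the $d\bar z^k$-component of $D''h$, so in an $H_0$-unitary frame at a point one has $D''_{\bar k}h = \pl_{\bar k}h = (\pl_k h)^*$ and $\hat D'_j h = \pl_j h$, making each summand a trace of the positive quantity $B^* h^{-1} B \ge 0$. For the zeroth-order term, the pointwise bound $|\Lambda F_\t|+|\Lambda\hat F_\t|\le C$ (from $\eqref{supLF}$ together with the fact that $H_0$ is fixed) gives $|{\rm Tr}(h(\Lambda\hat F_\t-\Lambda F_\t))|\le C\,{\rm Tr}(h)$ via Cauchy-Schwarz in the operator norm. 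Combining yields the subsolution inequality
\[
\Delta u \ge -A\,u,
\]
with $A$ independent of $i$.

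Given this inequality on the compact Hermitian manifold $(X,g)$, a standard Moser iteration for subsolutions of uniformly elliptic linear second-order operators produces $\sup_X u \le C\|u\|_{L^1}$ with $C$ depending only on $A$ and the fixed background geometry. The complex Laplacian $\Delta$ differs from the real Hodge Laplacian by bounded first-order terms depending only on $g$ and its torsion, so the iteration proceeds identically to the K\"ahler case. Applying this estimate to each $h_i$ and dividing by $M_i$ completes the proof.

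The main obstacle is organizational rather than analytical: one must verify that the Higgs structure and the Gauduchon torsion do not contaminate the positivity of the gradient term or the ellipticity needed for Moser iteration. The bidegree structure of $\t,\t^\dagger$ handles the former, while Corollary~\ref{Gaud} together with the uniform $C^0$ bounds on $T$ and $\Lambda F_\t$ controls the latter; no new analytic input beyond the tools already developed in Sections~2--4 is required.
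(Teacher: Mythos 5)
Your proof is correct, but it takes a genuinely different route from the paper. The paper follows Simpson/Weinkove: it quotes the pointwise inequality $\Delta\log{\rm Tr}(h)\geq -C(|\Lambda F_\t|+|\Lambda\hat F_\t|)$ and then runs a Green's function argument for the complex Laplacian (as in Lemma 4.2 of \cite{W}) to bound $\sup_X\log{\rm Tr}(h)$ by its average, after which concavity of the logarithm yields the $L^1$ bound. You instead derive the linear subsolution inequality $\Delta\,{\rm Tr}(h)\geq -A\,{\rm Tr}(h)$ directly from the identity $\hat\Delta_D h=h(\Lambda\hat F_\t-\Lambda F_\t)+g^{j\bar k}D''_{\bar k}h\,h^{-1}\hat D'_jh$ — your bidegree bookkeeping showing that $\t$ and $\t^\dagger$ drop out of the relevant components, that the connection terms are traceless, and that the gradient term is a sum of traces of $B^{\dagger}h^{-1}B\geq 0$ is all sound — and then apply the local maximum principle for subsolutions. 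Your version trades the Green's function (whose existence and lower bound for the non-self-adjoint complex Laplacian on a Hermitian manifold is itself a point one must check) for Moser iteration, which is heavier machinery but entirely local and insensitive to the torsion; the only small point worth making explicit is that passing from the $L^2$ to the $L^1$ norm on the right-hand side uses the standard interpolation $\|u\|_{L^2}^2\leq(\sup_X u)\,\|u\|_{L^1}$, and that the normalization by $M_i$ is harmless because the estimate is linear in $u$. Your aside invoking Corollary \ref{Gaud} is unnecessary — no integration by parts against the torsion enters your argument — but it does no harm.
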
 
\begin{proof}
The proof is identical to the K\"ahler case, and follows from the theory of elliptic PDE's. Taking the trace of \eqref{bootstrap}, and applying \eqref{int1} and \eqref{smallgoal} in the case of $\sigma=1$, gives
\bea
-P{\rm Tr}(h)&=&{\rm Tr}(h(K_\t-K_0{}_\t))-|h^{-\frac12}\t^{\dagger_0}(h)|^2_{H_0\otimes\bar\eta}-|h^{-\frac\sigma2}\pl_{A_0} h^\sigma|^2_{H_0}\nonumber\\
&\leq& |h|_H|K_\t-K_0{}_\t|_H\leq C{\rm Tr}(h),\nonumber
\eea
where the last inequality follows from Lemma \ref{supLF} the fact that the eigenvalues of $h$ are positive. We can now apply a standard Moser iteration argument for sub-solutions of elliptic equations, for example see Theorem 4.1 from \cite{FL} and set $p=1$. 
\end{proof}
Now, note that
\be
\int_X{\rm Tr}(h)\frac{\o^n}{n!}\leq\sup_X{\rm Tr}(h)^{1-\sigma_j}\int_X{\rm Tr}(h)^{\sigma_j}\frac{\o^n}{n!}\leq \frac12\sup_X{\rm Tr}(h)+2^{\frac{1-\sigma_j}{\sigma_j}}||{\rm Tr}(h)||_{L^{\sigma_j}}.\nonumber
\ee
Combining the above string of inequalities with Lemma \ref{DNM} we have
\be
\frac12 m(t)\leq (2C)^{\frac{1-\sigma_j}{\sigma_j}}||{\rm Tr}(h)||_{L^{\sigma_j}}.\nonumber
\ee
Because the function $(\cdot)^{\sigma_j}$ is an increasing function we take both sides to the power of $\sigma_j$ and see
\be
m(t)^{\sigma_j}\leq 2^{\sigma_j}(2C)^{1-\sigma_j}\int_X{\rm Tr}(h)^{\sigma_j}\frac{\o^n}{n!}.\nonumber
\ee
Let $r$ be the rank of $E$, and let $\lambda_1,...,\lambda_r$ be the positive eigenvalues of $h$. Again because the function $(\cdot)^{\sigma_j}$ is increasing we know that
\be
{\rm Tr}(h)^{\sigma_j}=(\lambda_1+...+\lambda_r)^{\sigma_j}\leq r^{\sigma_j}\lambda_{\rm Max}^{\sigma_j}\leq r^{\sigma_j}{\rm Tr}(h^{\sigma_j}).\nonumber
\ee
Thus putting everything together we have
\be
\label{nondegen}
m(t)^{\sigma_j}\leq (2r)^{\sigma_j}(2C)^{1-\sigma_j} f_j(t).
\ee
Since the left hand side becomes unbounded, we know for each $j$ the function $f_j(t)$ is unbounded. For fixed $j$ we pick a subsequence of times $t_{k(j)}$ with two properties. First, that $f_j(t_{k(j)})$ approaches infinity as $k(j)$ goes to infinity, and second that $\pl_t(f_j(t_{k(j)}))\geq 0$ for each $k(j)$. To see that this is possible let $t_{k(j)}$ be the time corresponding to $\sup_{t\in[0,k(j)]}f_j(t)$. Either this supremum occurs at a time $t<k(j)$, which means $f_j$ is at a local max in time, or it occurs at $k(j)$, which means $f_j$ must be non-decreasing in time, verifying the second desired property. Clearly this subsequence sends $f_j$ to infinity. Furthermore, because
\be
f_j(t)=\int_X{\rm Tr}(h^{\sigma_j})\frac{\o^n}{n!}\leq Vol(X)\sup_X{\rm Tr}(h^{\sigma_j})\leq rVol(X)m(t)^{\sigma_j},\nonumber
\ee
it is clear along the subsequence $t_{k(j)}$ that $m(t_{k(j)})$ goes to infinity as well. 

As stated before, along $t_{k(j)}$ we know $\ti h^\sigma$ is bounded in $W^{1,2}$. Thus, for each $\sigma_j$, there exists a subsequence of times (still denoted $t_{k(j)}$), that converges weakly in $W^{1,2}$ to a limiting endomorphism $h^{\sigma_j}_\infty$. In fact this limiting endomorphism is non-degenerate, which we can see by dividing \eqref{nondegen} by $m^{\sigma_j}$
\be
1\leq(2r)^{\sigma_j}(2C)^{1-\sigma_j}\int_X{\rm Tr}(\ti h^{\sigma_j})\frac{\o^n}{n!}\leq Vol(X)(2r)^{\sigma_j}(2C)^{1-\sigma_j}||\ti h^{\sigma_j}||_{L^2}^2(X).\nonumber
\ee
Thus the $L^2$ norm of $\ti h^{\sigma_j}$ is bounded below independent of the subsequence $t_{k(j)}$, and since weak $W^{1,2}$ convergence implies strong convergence in $L^2$ we know the $L^2$ norm of $h^{\sigma_j}_\infty$ is non zero.

Note that the endomorphisms $\ti h^{\sigma_j}_\infty$ are bounded uniformly in $W^{1,2}$ for all $j$ (which follows because \eqref{bound} is independent of $\sigma$). We therefore have an $W^{1,2}$ limit along a subsequence (still denoted $j$) as $\sigma_j$ approaches zero, which converges to a limit $h^0_\infty$. This limit is also non-degenerate, since the constant $(2r)^{\sigma_j}(2C)^{1-\sigma_j}$ from \eqref{nondegen} approaches $2C$ as $\sigma\ra 0$, which is bounded.

In order to construct a destabilizing sub-sheaf, we want to invoke a Theorem of Uhlenbeck and Yau from $\cite{UY}$ which states that a weakly holomorphic $W^{1,2}$ projection actually defines a coherent sub sheaf of $E$. Our weakly holomorphic projection is defined as follows
\be
\label{pi}
\pi=\lim_{j\ra \infty}\lim_{k\ra\infty}(I-\ti h^{\sigma_j}(t_{k(j)})).
\ee
The endomorphism $\pi$ is in $W^{1,2}$ since the endomorphisms $\ti h^{\sigma_j}_{k(j)}$ converge weakly in $W^{1,2}$. Following the argument in \cite{TL} one easily checks that $\pi^*=\pi^2=\pi$. Finally, to apply the theorem from  $\cite{UY}$  one needs that $(I-\pi)(\bar\pl+\t)\pi=0$ in $L^1$. This will prove not only that $\pi$ is a weakly holomorphic subbundle, but also that $\pi$ is preserved by the Higgs field $\theta$. To see this fact, by the argument on the bottom of page 86 of \cite{TL}, it suffices to show
\be
\label{fact}
||\pi\pl_{A_0}(I-\pi)||^2_{L^2(H_0)}=||\pi\t^{\dagger_0}(I-\pi)||^2_{L^2(H_0\otimes\bar\eta)}=0. 
\ee
Now, for all real numbers $0\leq\lambda\leq1$ and $0<s\leq\sigma\leq 1$, we have the following bound from $\cite{UY}$
\be
0\leq\frac{s+\sigma}s(1-\lambda^s)\leq\lambda^{-\sigma}.\nonumber
\ee
Thus for $0<s\leq\frac{\sigma_j}2\leq 1$ it holds
\be
0\leq\frac{s+\frac{\sigma_j}2}s(I-\ti h^s(t_{k(j)}))\leq \ti h^{-\frac{\sigma_j}2}(t_{k(j)}).\nonumber
\ee
By \eqref{bound} one now has
\bea
&&\int_X\left|\left(I-\ti h^s\right)\pl_{A_0} \ti h^{\sigma_j}\right|^2_{H_0}+\left|\left(I-\ti h^s\right)\t^{\dagger_0} \ti h^{\sigma_j}\right|^2_{H_0\otimes\bar\eta}\,\frac{\o^n}{n!}\nonumber\\
&&\leq\frac s{s+\frac{\sigma_j}2}\int_X|\ti h^{-\frac{\sigma_j}2}\pl_{A_0} \ti h^{\sigma_j}|^2_{H_0}+|\ti h^{-\frac{\sigma_j}2}\t^{\dagger_0}(\ti h^{\sigma_j})|^2_{H_0\otimes\bar\eta}\,\frac{\o^n}{n!}\leq C\frac s{s+\frac{\sigma_j}2},\nonumber
\eea
where the dependence of $\ti h$ on the time $t_{k(j)}$ has been suppressed for notational simplicity. Then \eqref{fact} follows by first letting $k$, then $s$, then $j$ go to infinity. We direct the reader \cite{TL, UY} for details, which are the same as in the K\"ahler case. We can now apply the following theorem of Uhlenbeck and Yau from \cite{UY}.
 \begin{theorem}
 \label{UY-}
 Given a weakly holomorphic subbundle $\pi$ of $E$, there exists a coherent subsheaf $\mathcal{F}$ of $E$,  and an analytic subset $S\subset X$ with the following properties
 
 \medskip
 i) $codim_XS\geq 2$
 
 ii) $\pi|_{X\backslash S}$ is $\Ci$ and satisfies both $\pi^*=\pi=\pi^2$ and $(I-\pi)\bar\pl\pi=0$
 
 iii) $\F':=\F|_{X\backslash S}$ is a holomorphic subbundle, and on $X\backslash S$ the endomorphism $\pi$ is the projection of $E$ onto $\F'$.
 \end{theorem}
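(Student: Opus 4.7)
The plan is to combine interior elliptic regularity for the overdetermined system satisfied by $\pi$ with a classical extension result for coherent analytic sheaves across analytic subsets of codimension at least two. First, I would differentiate $\pi^2=\pi$ and substitute the weak holomorphicity relation $(I-\pi)\bar\pl\pi=0$ to obtain, in schematic form, a weak second-order elliptic system $\Delta\pi=Q(\pi,\na\pi)$ with $Q$ polynomial in $\pi$ and quadratic in $\na\pi$. Since $\pi\in L^\infty\cap L^2_1$ (by the projection condition together with the hypothesis), a standard $L^p$-bootstrap combined with Moser iteration would upgrade $\pi$ to $C^\infty$ on the complement of a closed subset $S\subset X$ which, a priori, has only measure zero.

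The central analytic step is to upgrade $S$ to an analytic subset of codimension at least two. The condition $\pi^2=\pi$ forces the eigenvalues of $\pi$ into $\{0,1\}$ almost everywhere, so $\tr(\pi)$ takes an integer value $k$ that must be locally constant on the smooth locus. Following the Uhlenbeck--Yau strategy, I would construct local meromorphic frames for the image of $\pi$ by solving a $\bar\pl$-system governed by $(I-\pi)\bar\pl\pi=0$ on small balls, and identify $S$ with the indeterminacy locus of such frames; this both exhibits $S$ as analytic and allows one to read off its dimension. Ruling out codimension-one components of $S$ is the heart of the matter: a genuine rank jump across a smooth hypersurface would force a jump discontinuity in $\pi$, incompatible with the combined $L^2_1$ regularity and the weak holomorphicity equation interpreted as a distributional identity. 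I expect this codimension-two estimate to be the principal obstacle, since it is the point where the PDE analysis and the underlying complex-analytic structure must be reconciled.

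Once $S$ is shown to be analytic of codimension at least two, the image $\F':=\mathrm{Im}(\pi|_{X\setminus S})$ is an honest holomorphic subbundle of $E|_{X\setminus S}$, precisely because $(I-\pi)\bar\pl\pi=0$ says that $\bar\pl$ preserves $\F'$ on the smooth locus. To finish, I would form the direct image $j_*\F'$ under the inclusion $j:X\setminus S\hra X$ and invoke Siu's extension theorem for coherent analytic sheaves, whose hypotheses are met because $\mathrm{codim}_X S\geq 2$ and $j_*\F'$ sits inside the coherent sheaf $E$, to deduce that $j_*\F'$ is coherent on all of $X$. The canonical inclusion into $E$ then exhibits $j_*\F'$ as the desired coherent subsheaf $\F$, with $\F|_{X\setminus S}=\F'$ and the pointwise properties of $\pi$ on $X\setminus S$ holding by construction, completing the three conclusions of the theorem.
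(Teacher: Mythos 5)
The first thing to note is that the paper does not prove this statement at all: it is quoted verbatim as a theorem of Uhlenbeck and Yau and invoked as a black box (the relevant detailed treatments are in \cite{UY} and \cite{TL}). So there is no ``paper proof'' to match; the question is whether your sketch would stand on its own as a proof of this known but genuinely deep result. It would not, for two reasons. First, your opening regularity step is not sound as stated: from $\pi\in L^\infty\cap L^2_1$ with $\pi^2=\pi=\pi^*$ a.e.\ and $(I-\pi)\bar\pl\pi=0$ in $L^1$, the schematic equation $\Delta\pi=Q(\pi,\na\pi)$ with $Q$ quadratic in $\na\pi$ is \emph{critical} for $L^2_1$ data in complex dimension $\geq 2$, so the standard $L^p$-bootstrap/Moser iteration does not get off the ground; this is exactly why the actual proofs do not derive smoothness of $\pi$ first and analyticity of $S$ afterward, but instead construct local holomorphic (or meromorphic) sections spanning the image of $\pi$ directly from the weak holomorphicity relation, obtain the coherent sheaf $\F$ from those, and only then read off the smoothness of $\pi$ outside the singular set of $\F$ as a consequence of coherence. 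Second, and more importantly, you explicitly defer the heart of the theorem --- that the degeneracy locus $S$ is an \emph{analytic} subset of codimension at least two --- calling it ``the principal obstacle'' without supplying an argument. That step is precisely where the original Uhlenbeck--Yau argument is delicate (their monotonicity/extension argument for the locally free locus), and a heuristic appeal to ``a rank jump would force a discontinuity incompatible with $L^2_1$'' does not rule out codimension-one components: an $L^2_1$ function in several complex variables can be discontinuous, so no contradiction is actually derived. The final step (extending $\F'$ across $S$ by taking $j_*\F'\subset E$ and citing coherence of the direct image across a codimension-two analytic set) is fine once the previous steps are in place, but as written the proposal is an outline with its central lemma left open rather than a proof.
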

Thus we have  constructed a coherent subsheaf $\mathcal{F}$ of $E$, and to finish Proposition $\ref{finalprop}$ we must show $\cal F$ is proper and destabilizing. We first show  $\F$ is a proper subsheaf of $E$.

 Since $h^0_\infty$ is non-degenerate, it must have at least one nonzero eigenvalue. Thus $rk\,(h^0_\infty)\geq1$ which implies
 \be
 rk(\F)=rk(\pi)=rk\, (I-h^0_\infty)\leq r-1.\nonumber
 \ee
On the other hand we are assuming that $m(t_{k(j)})$ goes to infinity along every subsequence $k(j)$ (we have explicitly noted this is true for all $j$).  Because det$(h(t))=1$ along the Donaldson heat flow we must have an eigenvalue of $\ti h^{\sigma_j}_{k(j)}$ that goes to zero. Thus almost everywhere $h^{\sigma_j}_\infty$ has an eigenvalue equal to zero, and by strong $L^2$ convergence almost everywhere $h^0_\infty$ has an eigenvalue equal to zero, which implies $rk (\F)>0$. So $\F$ is indeed a proper subsheaf of $E$. 
 
We now prove $\mu(\F)\geq\mu(E)$ showing that $\F$ is destabilizing. Recall the Chern-Weil formula from Section $\ref{HB}$, which we apply using the fixed connection associated to $H_0$
 \be
 \mu(\F)=\frac{1}{2\pi \,rk(\F)}\left(\int_X{\rm Tr}(\pi K_0 \pi)\,\frac{\o^n}{n!}-||\pl_{A_0}\pi||^2_{L^2(H_0)}\right).\nonumber
  \ee
Using the definition of $\lambda$, given by \eqref{lambda}, we modify the formula slightly to include $\mu(E)$
\be
 \mu(\F)=\frac{1}{2\pi rk(\F)}\left(\int_{X}{\rm Tr}((K_0-\lambda I)\circ\pi)\,\frac{\o^n}{n!}-||\pl_{A_0}\pi||^2_{L^2(H_0)}\right)+\mu(E).\nonumber
 \ee
Thus to show $\mu(\F)\geq\mu(E)$, we must verify
  \be
  \label{semifin}
  \int_{X}{\rm Tr}((K_0-\lambda I)\circ\pi)\,\frac{\o^n}{n!}\geq||\pl_{A_0}\pi||^2_{L^2(H_0)}.
  \ee
The inequality above is a direct consequence of the following lemma.
\begin{lemma}
  \label{finaleq}
  The projection $\pi$ defined by \eqref{pi} satisfies the following inequality
 \be
  \int_{X}{\rm Tr}((K_0{}_\t-\lambda I)\circ\pi)\,\frac{\o^n}{n!}\geq||\pl_{A_0}\pi||^2_{L^2(H_0)}+||\t^{\dagger_0}(\pi)||^2_{L^2(H_0\otimes\bar\eta)}.\nonumber
 \ee
\end{lemma}
Note that the left had side of the above inequality contains $K_0{}_\t$ as opposed to $K_0$. However, one can show
  \be
  {\rm Tr}([\t,\t^{\dagger_0}]_\eta\pi)=|\t^{\dagger_0}(\pi)|^2_{H_0\otimes\bar\eta}.\nonumber
  \ee
  To see this, we use that $\t$ preserves the subbundle defined by $\pi$, which means $(I-\pi)\t\pi=0$ in $L^1$. Using the commutator and the fact that $\pi\t\pi=\t\pi$ in the $L^1$ sense it is easy to verify the above equality. Thus Lemma \ref{finaleq} is indeed equivalent to \eqref{semifin}.

We now prove Lemma \ref{finaleq}. By the definition of degree we have $\int_X$Tr$(K_0+[\t,\t^{\dagger_0}]_\eta-\lambda I)=0$. This fact, along with the observation that the convergence defining $\pi$ is strong in $L^2$, yields
 \be
 \int_{X}{\rm Tr}((K_0{}_\t-\lambda I)\circ\pi)\,\frac{\o^n}{n!}=-\lim_{j\ra\infty}\lim_{k\ra\infty}\int_X{\rm Tr}((K_0{}_\t-\lambda I)\,\ti h^{\sigma_j}(t_{k(j)}))\,\frac{\o^n}{n!}.\nonumber
 \ee
From now on we drop the $t_{k(j)}$ from our expressions to ease notation. Our next step is to modify the above formula so that it contains $K_\t$ instead of $K_0{}_\t$. Applying equation \eqref{diffK} gives
 \bea
 \label{cool}
 -\int_X{\rm Tr}((K_0{}_\t-\lambda I)\,\ti h^{\sigma_j})\frac{\o^n}{n!}&=&\int_X{\rm Tr}(\bar\pl(\ti h^{-1} \pl_{A_0}\ti h)\,\ti h^{\sigma_j})\wedge\frac{\o^{n-1}}{(n-1)!}\\
&&+\int_X{\rm Tr}\left(-(K_\t-\lambda I)\,\ti h^{\sigma_j}+[\t,\t^\dagger]_\eta \ti h^\sigma-[\t,\t^{\dagger_0}]_\eta \ti h^\sigma\right)\,\frac{\o^n}{n!}.\nonumber
  \eea
Now, by our flow equation \eqref{DHF} we have
\be
-\int_X{\rm Tr}((K_\t-\lambda I)\,\ti h^{\sigma_j})\,\frac{\o^n}{n!}=\frac{1}{m(t_{k(j)})^{\sigma_j}}\int_X{\rm Tr}(h^{-1}\dot h h^{\sigma_j})\,\o^n=\frac{\pl_t f_j(t_{k(j)})}{\sigma_j m(t_{k(j)})^{\sigma_j}}.\nonumber
\ee
Both $m(t)$ and $\sigma_j$ are always positive, and for each fixed $j$ we chose our subsequence $k(j)$ so that $\pl_t f_j(t_{k(j)})\geq 0$. Furthermore, we have already seen that
\bea
\frac1{m^{\sigma_j}}{\rm Tr}([\t,\t^\dagger]_\eta h^\sigma-[\t,\t^{\dagger_0}]_\eta h^\sigma)&\geq& \frac1{m^{\sigma_j}}|h^{-\frac\sigma2}\t^{\dagger_0}(h^\sigma)|^2_{H_0\otimes\bar\eta}\nonumber\\
&=&|\ti h^{-\frac\sigma2}\t^{\dagger_0}(\ti h^\sigma)|^2_{H_0\otimes\bar\eta}\geq |\t^{\dagger_0}(\ti h^\sigma)|^2_{H_0\otimes\bar\eta}.\nonumber
\eea
Thus we can return to \eqref{cool} and conclude
  \be
 -\int_X{\rm Tr}((K_0{}_\t-\lambda I)\,\ti h^{\sigma_j})\frac{\o^n}{n!}\geq \int_X{\rm Tr}(\bar\pl(\ti h^{-1} \pl_{A_0}\ti h)\,\ti h^{\sigma_j})\wedge\frac{\o^{n-1}}{(n-1)!}+||\t^{\dagger_0}(\ti h^\sigma)||^2_{L^2(H_0\otimes\bar\eta)}.\nonumber
   \ee
Note that the first term on the right hand side above appears in proof of Proposition 3.4.8 from \cite{TL}. Following their argument exactly, one can integrate by parts to prove the following inequality
\be
 \int_X{\rm Tr}(\bar\pl(\ti h^{-1} \pl_{A_0}\ti h)\,\ti h^{\sigma_j})\wedge\frac{\o^{n-1}}{(n-1)!}\geq\int_X|\ti h^{-\frac{\sigma_j}2}\pl_{A_0} \ti h^{\sigma_j}|_{H_0}^2\,\frac{\o^n}{n!}\geq||\pl_{A_0}(\ti h^{\sigma_j})||^2_{L^2(H_0)}.
\nonumber
\ee
Putting everything together, so far we have
\be
-\lim_{j\ra\infty}\lim_{k\ra\infty}\int_X{\rm Tr}((K_0{}_\t-\lambda I)\,\ti h^{\sigma_j})\frac{\o^n}{n!}\geq \lim_{j\ra\infty}\lim_{k\ra\infty}\left(||\pl_{A_0}(\ti h^{\sigma_j})||^2_{L^2(H_0)}+||\t^{\dagger_0}(\ti h^\sigma)||^2_{L^2(H_0\otimes\bar\eta)}\right).\nonumber
\ee
Because convergence of $\ti h^{\sigma_j}_{k(j)}$ is strong in $L^2$, by first taking the limit in $k$ and then $j$ the left hand side can be written as
\be
\lim_{j\ra\infty}\lim_{k\ra\infty}\int_X{\rm Tr}\left((K_0{}_\t-\lambda I)\,(I-\ti h^{\sigma_j})\right)\frac{\o^n}{n!}=\int_{X}{\rm Tr}((K_0{}_\t-\lambda I)\circ\pi)\,{\o^n}.\nonumber
\ee
Now, convergence is only weak in $W^{1,2}$, yet by lower semi-continuity of weak limits we have
\be
||\pl_{A_0}(\pi)||^2_{L^2(H_0)}+||\t^{\dagger_0}(\pi)||^2_{L^2(H_0\otimes\bar\eta)}\leq\lim_{j\ra\infty}\lim_{k\ra\infty}\left(||\pl_{A_0}(\ti h^{\sigma_j})||^2_{L^2(H_0)}+||\t^{\dagger_0}(\ti h^\sigma)||^2_{L^2(H_0\otimes\bar\eta)}\right).\nonumber
\ee
From here Lemma \ref{finaleq} follows, which verifies that $\F$ is destabilizing. This completes the proof of Proposition $\ref{finalprop}$.

 \end{normalsize}
 
\newpage

\vspace{10mm}

\begin{centering}

\textnormal{ Department of Mathematics,
Harvard University\\
One Oxford Street\\
Cambridge, MA 02138\\
e-mail: ajacob@math.harvard.edu}

\end{centering}

\end{document}